\newtheorem{thm}{Theorem}
\newtheorem{prop}{Proposition}
\newtheorem{lem}{Lemma}
\numberwithin{equation}{section}
\numberwithin{prop}{section}
\numberwithin{lem}{section}
\numberwithin{thm}{section}
\newtheorem{cor}{Corollary}
\numberwithin{cor}{section}
\newtheorem{conj}{Conjecture}
\theoremstyle{definition}
\numberwithin{defn}{section}
\newtheorem{rem}{Remark}
\numberwithin{rem}{section}
\newcommand{\ZZ}{\mathbb{Z}}
\newcommand{\CC}{\mathbb{C}}
\newcommand{\be}{\begin {equation}}
\newcommand{\ee}{\end{equation}}
\newcommand{\bee}{\begin {equation*}}
\newcommand{\eee}{\end{equation*}}
\newcommand{\nop}[1]{{}^{\scriptscriptstyle{\circ}}_{\scriptscriptstyle{\circ}}{#1}{}^{\scriptscriptstyle{\circ}}_{\scriptscriptstyle{\circ}}}
\def \a{\alpha }
\def \H{\mathcal{H}}
\def \F{\mathcal{F}}
\def \o{\omega}
\def \op{\omega^{0}}
\def \b{\beta}
\def \H{\mathcal{H}}
\def \1{\mathbb{1}}
\begin{document}
\title{Permutation orbifolds of the Heisenberg Vertex Algebra $\H(3)$}  
\author{Antun Milas, Michael Penn, Hanbo Shao}
\address{Department of Mathematics and Statistics, SUNY-Albany}
\email{amilas@albany.edu}
\address{Mathematics Department, Randolph College}
\email{mpenn@randolphcollege.edu}
\address{Department of Mathematics and Computer Science, Colorado College}
\email{h\_shao@coloradocollege.edu}

\maketitle

\begin{abstract} 
We study the $S_3$-orbifold of a rank three Heisenberg vertex algebra 
in terms of generators and relations. 
By using invariant theory, we prove that the orbifold algebra has a minimal strong generating 
set of vectors whose  conformal weights are $1,2,3,4,5,6^2$ (two generators of degree $6$). The structure 
of the cyclic $\mathbb{Z}_3$-oribifold is determined by similar methods.
We also study characters of modules for the orbifold algebra.
\end{abstract}

\section{Introduction}

For every vertex operator algebra $V$, the $n$-fold tensor product $V^{\otimes^n}$ has a natural vertex operator algebra structure.
The symmetric group $S_n$ acts on $V(n):=V^{\otimes n}$ by permuting tensor factors and thus $S_n \subset {\rm Aut}(V(n))$.
Denote by $V(n)^{S_n}$ the fixed point vertex operator subalgebra also called the $S_n$-orbifold of $V(n)$. It is an open 
problem to classify irreducible modules  of $V(n)^{S_n}$ although it is widely believed that every such module should 
come from a $g$-twisted $V^{\otimes n}$-module for some $g \in S_n$.
When it comes to the inner structure  of $V(n)^{S_n}$ (e.g., a strong system of generators) very little is known. 
Even for the Heisenberg orbifold $\H(n)^{S_n}$, where $\mathcal{H}$ is the rank one Heisenberg vertex algebra, this problem seems quite difficult.

Finite and permutation orbifolds have been extensively studied both in physics and mathematics literature. 
According to \cite{B2}, the earliest study of permutation orbifolds seems to be \cite{KS}. 
The first systematic construction of cyclic orbifolds, including their twisted sectors appeared in \cite{BHS}. 
Explicit formulas for characters and modular
transformation properties of permutation orbifolds of Rational Conformal Field Theories were given in \cite{B}. 

In the literature on vertex algebras, the main focus has been on the classification and construction of twisted modules starting with \cite{FLM,Le}.
Further developments include ``Quantum Galois" theory developed in \cite{DM}, work  on twisted sectors of permutation orbifolds   \cite{BDM} (see also \cite{BHL}), tensor category structure for general $G$-orbifolds of rational vertex algebras \cite{DRX1,Ki}, etc.
There are also numerous papers on orbifold vertex algebras for abelian groups and groups of small order.
Structure and representations of $2$-permutation orbifolds were subjects of \cite{A0,A,DN}; see also a more recent work on  $3$-permutation orbifolds of lattice vertex algebras \cite{DRX2}.
The ADE orbifolds of rank one lattice vertex algebras (e.g.  \cite{DJ}) are important for classification of $c=1$ rational vertex algebras.
 Orbifolds of irrational $C_2$-cofinite vertex algebras have been investigated in \cite{ALM}, \cite{CL} and \cite{A}.
 A. Linshaw  (see \cite{CL, L1,L2,L3}, etc.) extensively studied orbifolds of ``free field" vertex operator (super)algebras using the classical invariant theory \cite{W} and its (super) extensions.
As a consequence of the main result in \cite{L3}, every finite orbifold of an affine  vertex algebra is finitely strongly generated.
In particular, this implies that  $\mathcal{H}(n)^{S_n}$ has a finite strong set of generators. Characters of orbifolds of affine vertex algebras were investigated earlier in \cite{KT}.
 
 In this paper we are concerned with the structure of one of the simplest non-abelian orbifolds, $\H(3)^{S_3}$ where $\H$ is the rank one Heisenberg vertex algebra (case $\H(2)^{S_2}$ is well-understood \cite{DN,A0}). We prove three main results.
The first result pertains to generators of $\H(3)^{S_3}$. We show in Theorem 4.1 that this vertex algebra is isomorphic to a $W$-algebra of type $(2,3,4,5,6^2)$ tensored with a rank one Heisenberg vertex algebra. Here labels $2,3,4,5,6^2$ indicate that our $W$-algebra is strongly generated by the Virasoro vector (of degree two) and five primary vectors of degrees $3,4,5$ and two of degree $6$. These five generators are explicitly given in Section 4 where we denoted them by $J_1,J_2,C_1,C_2$ and $C_3$.
Our second result is about the cyclic $\mathbb{Z}_3$-orbifold of $\mathcal{H}(3)$ (see Theorem  5.1 for details).

In the last part (Sections 6 and 7) we discuss  characters of certain $\H(3)^{S_3}$-modules and their modular properties. It is expected that many irrational vertex algebras will enjoy modular invariance  in a generalized sense involving iterated integrals instead of sums. For the rank $n$ permutation orbifold $\H(n)^{S_n},$ the character of a module $M$ is expected to transform as
 \begin{equation} \label{gen-mod}
 {\rm ch}[M] \left(-\frac{1}{\tau}\right)=\sum_{i \geq 1}^n \int_{\mathbb{R}^i} S_{M, M_{\lambda_i}} {\rm ch}[M_{\lambda_i}](\tau) d \lambda_i,
 \end{equation}
 where $S_{M,M_{\lambda_i}} \in \mathbb{C}$ and $\lambda_i \in \mathbb{R}^i$, $1 \leq i \leq n$ parametrize certain $\H(n)^{S_n}$-modules. Our third main result gives strong evidence for this conjecture
for $n=3$  (see Theorem 7.1).

 




\section{The $S_n$-orbifold of the Heisenberg vertex operator algebra}

Let $\H$ denote the rank one Heisenberg vertex operator algebra generated by $\alpha(-1)\1$, with the usual conformal vector (and grading) given by $\omega=\frac{1}{2} \alpha^2(-1)\1$.
Let $\H(n)=\H^{\otimes^n}$. 
For convenience, we suppress the tensor product symbol and let  $\a_1(-1)\1:=\a(-1)\1 \otimes  \cdots  \otimes \1 \in \H(n)$, and similarly we define $\a_i(-1)\1$, $i \geq 2$, 
 such that
$\H(n)=\langle \alpha_1(-1)\1, \cdots ,\alpha_n(-1)\1 \rangle$.
We consider the natural action of $S_n$ on $\H(n)$ given by 
\be
\sigma\cdot \a_{i_1}(m_1)\cdots \a_{i_k}(m_k)\1=\a_{\sigma(i_1)}(m_1)\cdots \a_{\sigma(i_k)}(m_k)\1,
\ee
for $1\leq i_j\leq n$, $m_j<0$, and $\sigma\in S_n$. 


We clearly have a natural linear isomorphism 
\be\label{linearisom}
\H(n)\cong \CC[x_i(m) | 1\leq i \leq n, m\geq 0]\ee
induced by $\a_i(-m-1)\mapsto x_i(m)$ for $m\geq 0$. Using the terminology of \cite{L2}, we say that $\CC[x_i(m) | 1\leq i \leq n, m\geq 0]$ is the associated graded algebra of the vertex algebra $\H(n)$. Further, we may endow the polynomial algebra $\CC[x_i(m) | 1\leq i \leq n, m\geq 0]$ with the structure of a $\partial$-ring by defining the map
\be\begin{aligned}
\partial:\CC[x_i(m) | 1\leq i \leq n, m\geq 0]&\to \CC[x_i(m) | 1\leq i \leq n, m\geq 0]\\
x_i(m)&\mapsto (m+1)x_i(m+1),\end{aligned}\ee
where the action of $\partial$ is extended to the whole space via the Leibniz rule. 
This definition of $\partial$  is compatible with the translation operator in $\H(n)$ given by $T(v)=v_{-2}\mathbb{1}$.

The following Lemma is from  \cite{L1}.
\begin{lem}\label{reconstruction}
Let $\mathcal{A}$ be a vertex algebra with a $\mathbb{Z}_{\geq 0}$ filtration, where $\tilde{\mathcal{A}}$ is the associated $\partial$-ring. If $\{\tilde{a}_i|i\in I\}$ generates $\tilde{\mathcal{A}}$ then $\{a_i | i\in I\}$ strongly generates $\mathcal{A}$, where $a_i$ and $\tilde{a}_i$ are related via the natural linear isomorphism described by the $\mathbb{Z}_{\geq 0}$ filtration.
\end{lem}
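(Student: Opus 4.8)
The plan is to reduce the statement about strong generation of the vertex algebra $\mathcal{A}$ to a purely commutative-algebraic statement about the associated graded $\partial$-ring $\tilde{\mathcal{A}}$, exploiting the fact that the filtration is compatible with all the structure maps. First I would recall the precise meaning of ``strongly generates'': a set $\{a_i \mid i \in I\}$ strongly generates $\mathcal{A}$ if $\mathcal{A}$ is spanned by iterated products $(a_{i_1})_{n_1}(a_{i_2})_{n_2}\cdots(a_{i_k})_{n_k}\mathbb{1}$ with all $n_j < 0$, equivalently by the normally-ordered monomials in the $a_i$ and their $T$-derivatives. Correspondingly, $\{\tilde a_i\}$ generates the $\partial$-ring $\tilde{\mathcal{A}}$ means $\tilde{\mathcal{A}}$ is the smallest $\partial$-stable subalgebra containing all $\tilde a_i$, i.e. $\tilde{\mathcal{A}}$ is spanned as a commutative algebra by products of the $\partial^k \tilde a_i$.

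The key step is an induction on the filtration degree. Write $\mathcal{A} = \bigcup_{d \geq 0} \mathcal{A}_{(d)}$ for the filtration, and let $\mathcal{A}' \subseteq \mathcal{A}$ be the subalgebra strongly generated by $\{a_i\}$; I want $\mathcal{A}' = \mathcal{A}$. Suppose inductively that $\mathcal{A}_{(d-1)} \subseteq \mathcal{A}'$ and pick $a \in \mathcal{A}_{(d)}$. Its symbol $\tilde a \in \tilde{\mathcal{A}}_{(d)} = \mathcal{A}_{(d)}/\mathcal{A}_{(d-1)}$ lies, by hypothesis, in the algebra generated by the $\partial^k \tilde a_i$; since $\partial$ is the symbol of $T$ and the associated graded product is the symbol of the normally-ordered (Wick) product, I can lift any polynomial expression for $\tilde a$ in the $\partial^k \tilde a_i$ to the corresponding normally-ordered expression in the $T^k a_i$, obtaining an element $b \in \mathcal{A}'$ whose symbol equals $\tilde a$. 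Then $a - b \in \mathcal{A}_{(d-1)} \subseteq \mathcal{A}'$ by the inductive hypothesis, so $a \in \mathcal{A}'$. The base case $\mathcal{A}_{(0)} = \mathbb{C}\mathbb{1}$ is immediate. This is essentially the standard ``reconstruction'' argument, and the cited source is \cite{L1}.

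The main obstacle — really the only subtle point — is verifying that the lifting in the induction step is well behaved: one must know that the associated graded of $\mathcal{A}$ with respect to the given $\mathbb{Z}_{\geq 0}$-filtration is genuinely a commutative $\partial$-ring with $\partial$ induced by $T$ and multiplication induced by the $(-1)$-product (normally-ordered product), and that higher products $a_n b$ for $n \geq 0$ land in strictly lower filtration degree so that they do not interfere with the symbol computation. This is exactly the structural input encoded in the phrase ``vertex algebra with a $\mathbb{Z}_{\geq 0}$ filtration'' in the sense of \cite{L1, L2}; once that framework is in place, the filtration degree of a normally-ordered monomial in the $T^k a_i$ equals the number of factors, matching the polynomial degree of the corresponding monomial in $\tilde{\mathcal{A}}$, and the induction closes. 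I would therefore organize the proof as: (i) recall the filtration axioms and the identification of $\tilde{\mathcal{A}}$ as a $\partial$-ring; (ii) state the two notions of generation explicitly; (iii) run the degree induction above. No step requires more than bookkeeping once the filtration compatibility is invoked.
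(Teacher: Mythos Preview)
Your argument is correct and is exactly the standard filtration-induction proof of this reconstruction lemma. Note, however, that the paper does not actually supply its own proof: the lemma is simply quoted from \cite{L1}, so there is nothing in the paper to compare against beyond observing that your write-up is the expected argument behind that citation.
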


Further, we recall that the invariant ring $\CC[x_1,\dots,x_n]^{S_n}$ has a variety of generating sets, including the power sum polynomials 
\begin{align*}
& p_i=x_1^i+\cdots+x_n^i, \ \ 1\leq i\leq n.
\end{align*}
In addition to this, it is common to study the invariant theory of the ring of infinitely many commuting copies of this polynomial algebra, where we denote by $x_i(m)$ the copy of $x_i$ from the $m^{\text{th}}$ copy of the polynomial algebra.  A Theorem of Weyl \cite{W} shows that $\CC[x_i(m) | 1\leq i \leq n, m\geq 0]^{S_n}$ is generated by the polarizations of these polynomials
\be\label{dringgenerators}
q_k(m_1,\dots,m_k)=\sum_{i=1}^n x_i(m_1)\cdots x_i(m_k),\ee
for $1\leq k\leq n$.
Now, applying Lemma \ref{reconstruction}, we have an initial strong generating set for the orbifold $\H(n)^{S_n}$ given by the vectors
\be\label{firstorbifoldgenerators}
\omega_k(m_1,\dots,m_k)=\sum_{i=1}^n \a_i(-1-m_1)\cdots \a_i(-1-m_k)\1,
\ee
for $1\leq k\leq n$ and $m_j\geq 0$. It should be noted that the conformal vector of $\H(n)$ is $\frac{1}{2}\o_2(0,0)$, making the orbifold a vertex operator algebra.

\section{Warmup: $\H(2)^{S_2}$}


Here we describe the structure of $\H(2)^{S_2}$. This case is well-known, so we only provide a few details. 
Denote by $M(1)^+:=\H(1)^{\ZZ_2} $ the fixed point subalgebra under the action $\a(-1)\1 \to -\a(-1)\1$, where $\a$ is the Heisenberg generator (in the physics literature $M(1)^+$ is often denoted by 
$W(2,4)$).
This vertex algebra was thoroughly studied in \cite{DN} and elsewhere.  
Let
\begin{align*}
h& =\a_1(-1)\1+\a_2(-1)\1, \ \ \ h^\perp=\a_1(-1)\1-\a_2(-1)\1,  \\ 
& \ \ \omega=\frac14 \a_1^2(-1)\1+\frac14 \a_2^2(-1)\1- \frac12 \a_1(-1) \a_2(-1)\1.
\end{align*}
Observe that as vertex algebras 
$$\H(2)=\H(1)_{h} \otimes \H(1)_{h^\perp}$$
where $\H(1)_{h}=\langle h \rangle$ and $\H(1)_{h^\perp}=\langle h^\perp \rangle$ with conformal vector $\omega$.
The nontrivial element of the group $S_2$ fixes the first tensor factor and 
$$h^\perp \to - h^{\perp}.$$ Thus we immediately get 
\begin{equation} \label{iso}
\H(2)^{S_2} \cong \H(1) \otimes M(1)^+ .
\end{equation}
%
It is easy to see that
$$H=\frac{1}{2} \biggl( \a_1(-1)^4 +\a_2(-1)^4- 4 \a_1^3(-1) \a_2(-1) -4 \a_1(-1)\a_2^3(-1)+6 \a_1^2(-1) \a_2^2(-1)+3 \a_1^2(-2)+3 \a_2^2(-2)$$
$$- 6 \a_1(-2)\a_2(-2)+2 \a_1(-1)\a_2(-3)+2\a_1(-3)\a_2(-1)-2 \a_1(-3)\a_1(-1)-2\a_2(-3)\a_2(-1) \biggr)\1$$
is contained in $ \langle h^\perp \rangle$ and is  primary  of conformal weight $4$, and thus a generator of  $M(1)^+$.





\section{The Rank Three Case}

\subsection{The orbifold $\H(3)^{S_3}$}

As described above we know that the orbifold $\H(3)^{S_3}$ will be strongly generated by the vectors
\be\label{initialgenerators}\begin{aligned}
\omega_1(a)&=\sum_{i=1}^3 \a_i(-1-a)\1, \\
 \omega_2(a,b)&=\sum_{i=1}^3\a_i(-1-a)\a_i(-1-b)\1, \\ 
\omega_3(a,b,c)&=\sum_{i=1}^3\a_i(-1-a)\a_i(-1-b)\a_i(-1-c)\1.
\end{aligned}\ee

The following change of basis of the generating set will allow for an efficient reduction in the generating set of the corresponding orbifold
\be\begin{aligned}
\b_1(-1)\1&=\frac{1}{\sqrt{3}}(\a_1(-1)\1+\a_2(-1)\1+\a_3(-1)\1),\\
\b_2(-1)\1&=\frac{1}{\sqrt{3}}(\a_1(-1)\1+\eta^2\a_2(-1)\1+\eta\a_3(-1)\1),\\
\b_3(-1)\1&=\frac{1}{\sqrt{3}}(\a_1(-1)\1+\eta\a_2(-1)\1+\eta^2\a_3(-1)\1),\\
\end{aligned}\ee
where $\eta$ is a primitive third root of unity. Using this generating set we have $\sigma\cdot \beta_1(-1)\1=\beta_1(-1)\1$ for all $\sigma\in S_3$. Further, examining the action of the generators of $S_3$ on the generators of $\H(3)$ we have
\be\begin{aligned}
\begin{pmatrix}2 &3 \end{pmatrix}\cdot\b_2(-1)\1&=\b_3(-1)\1, \ \ \ \ \  \ \ \begin{pmatrix}2 & 3 \end{pmatrix}\cdot\b_3(-1)\1=\b_2(-1)\1,\end{aligned}\ee
and
\be\begin{aligned}
\begin{pmatrix}1&2&3\end{pmatrix}\cdot\b_2(-1)\1&=\eta \b_2(-1)\1, \ \ \ \ \  \ \ \begin{pmatrix}1&2&3\end{pmatrix}\cdot\b_3(-1)\1=\eta^2\b_3(-1)\1.\end{aligned}\ee
From this action, we see that an initial generating set for the orbifold $\H(3)^{S_3}$ may be taken to be
\be\begin{aligned}\label{initialgenerators2}
\op_1(a)&=\b_1(-1-a)\1,\\
\op_2(a,b)&=\b_2(-1-a)\b_3(-1-b)\1+\b_3(-1-a)\b_2(-1-b)\1,\\
\op_3(a,b,c)&=\b_2(-1-a)\b_2(-1-b)\b_2(-1-c)\1+\b_3(-1-a)\b_3(-1-b)\b_3(-1-c)\1,
\end{aligned}\ee
for $0\leq a\leq b\leq c$. In fact, we can explicitly write the relation between our original generators (\ref{initialgenerators}) and our new generators (\ref{initialgenerators2}) as follows
\be\begin{aligned}\label{generatortranslation}
\o_1(a)&=\sqrt{3}\op_1(a),\\
\o_2(a,b)&=\op_2(a,b)+\op_1(a)_{-1}\op_1(b),\\
\o_3(a,b,c)&=\frac{1}{3}\op_3(a,b,c)+\frac{1}{\sqrt{3}}\left(\op_1(a)_{-1}\op_2(b,c)+\op_1(b)_{-1}\op_2(a,c)\right.\\& \hspace{0.8in}\left.+\op_1(c)_{-1}\op_2(a,b)+\op_1(a)_{-1}\op_1(b)_{-1}\op_1(c)\right).
\end{aligned}\ee

We may make a similar change of variables for the generating set of $\CC[x_i(m)|1\leq i\leq 3, m\geq 0]$ by setting 
\be\begin{aligned}
y_1(m_1)&=\frac{1}{\sqrt{3}}(x_1(m_1)+x_2(m_1)+x_3(m_1)),\\
y_2(m_2)&=\frac{1}{\sqrt{3}}(x_1(m_2)+\eta^2x_2(m_2)+\eta x_3(m_2)),\\
y_3(m_3)&=\frac{1}{\sqrt{3}}(x_1(m_3)+\eta x_2(m_3)+\eta x_3(m_3)),\end{aligned}\ee
for $m_i\geq 0$, and 
\be\begin{aligned}\label{dringgenerators2}
q_1^0(a)&=y_1(a),\\
q_2^0(a,b)&=y_2(a)y_3(b)+y_3(a)y_2(b),\\
q_3^0(a,b,c)&=y_2(a)y_2(b)y_2(c)+y_3(a)y_3(b)y_3(c),\end{aligned}\ee
for $a,b,c\geq 0$.

Using the translation operator, $T$, restricted to the orbifold $\mathcal{H}(3)^{S_3}$, the initial strong generating set (\ref{initialgenerators2}) can be reduced per the following Lemma.

\begin{lem}\label{orbifoldgenerators1} The orbifold $\mathcal{H}(3)^{S_3}$ is strongly generated by the vectors
\be\begin{aligned}
\o_1^0(0),&\\
\o_2^0(0,2a)& \text{ for } a\geq 0,\\
\o_3^0(0,a,b)& \text{ for } 0\leq a \leq b.\end{aligned}\ee
\end{lem}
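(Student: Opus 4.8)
The plan is to perform the reduction on the level of the associated graded $\partial$-ring and then transfer it back by Lemma~\ref{reconstruction}. Under the natural linear isomorphism the vectors $\o_1^0(a),\o_2^0(a,b),\o_3^0(a,b,c)$ of (\ref{initialgenerators2}) correspond to the elements $q_1^0(a),q_2^0(a,b),q_3^0(a,b,c)$ of (\ref{dringgenerators2}), which, as recalled above (Weyl's theorem \cite{W} applied after the invertible linear change of variables from the $x_i$ to the $y_i$), generate the $\partial$-ring $\CC[x_i(m)\mid 1\le i\le 3,\ m\ge 0]^{S_3}$ of the orbifold, and $T$ corresponds to $\partial$. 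Hence it suffices to express each $q_1^0(a)$, $q_2^0(a,b)$, $q_3^0(a,b,c)$ inside the $\partial$-subring generated by $\{q_1^0(0)\}\cup\{q_2^0(0,2a):a\ge 0\}\cup\{q_3^0(0,a,b):0\le a\le b\}$; the statement for $\H(3)^{S_3}$ then follows from Lemma~\ref{reconstruction}, and since the conformal vector $\tfrac12\o_2(0,0)$ is a normally ordered combination of $\o_1^0(0)$ and $\o_2^0(0,0)$ (see (\ref{generatortranslation})) the orbifold remains a vertex operator algebra.

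Degree one is immediate: $\partial q_1^0(a)=(a+1)q_1^0(a+1)$ gives $q_1^0(a)=\tfrac1{a!}\partial^a q_1^0(0)$. Degree two is the substantive case, and I would treat it by induction on the weight $n=a+b$, using the Leibniz identity
\[
\partial\, q_2^0(a,b)=(a+1)\,q_2^0(a+1,b)+(b+1)\,q_2^0(a,b+1)
\]
together with the symmetry $q_2^0(i,j)=q_2^0(j,i)$. Applying $\partial$ to the weight-$(n-1)$ generators $q_2^0(j,n-1-j)$ for $0\le j\le\lfloor(n-1)/2\rfloor$ produces a linear system whose unknowns are the weight-$n$ generators $q_2^0(j,n-j)$, $0\le j\le\lfloor n/2\rfloor$, and whose right-hand sides, by the inductive hypothesis, already lie in the span of iterated $\partial$-images of the listed generators. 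Ordering equations and unknowns by $j$ and folding the one out-of-range term back by symmetry, this system is triangular with nonzero diagonal entries. When $n$ is odd the number of equations equals the number of unknowns, so every weight-$n$ generator is forced into the span and no odd-weight degree-two generator survives; when $n$ is even, say $n=2m$, there is exactly one more unknown than equation, and taking the free parameter to be $q_2^0(0,n)=q_2^0(0,2m)$, which is in the list, the remaining weight-$n$ generators are solved for successively. This shows every $q_2^0(a,b)$ lies in the $\partial$-subring generated by $\{q_2^0(0,2a):a\ge 0\}$.

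For degree three the same mechanism applies with a double induction. Using
\[
\partial\, q_3^0(a,b,c)=(a+1)q_3^0(a+1,b,c)+(b+1)q_3^0(a,b+1,c)+(c+1)q_3^0(a,b,c+1),
\]
one solves, when $\min(a,b,c)\ge 1$, for $q_3^0(a,b,c)$ in terms of a $\partial$-image of a generator of strictly smaller total weight $a+b+c$ and of two generators of the same total weight but strictly smaller minimal index; inducting first on total weight and then on minimal index, with base case $\min(a,b,c)=0$ — which is exactly $q_3^0(0,a,b)$ — shows every $q_3^0(a,b,c)$ lies in the $\partial$-subring generated by $\{q_3^0(0,a,b):0\le a\le b\}$. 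Combining the three reductions yields the claimed strong generating set $\o_1^0(0)$, $\o_2^0(0,2a)$, $\o_3^0(0,a,b)$. The step I expect to be the main obstacle is the bookkeeping for the degree-two induction: verifying that the weight-$n$ linear system is genuinely nonsingular in the odd case — equivalently, that the reflected term never cancels a diagonal pivot — and is matched with precisely one surviving generator in the even case. Everything else is a routine translation back through Lemma~\ref{reconstruction}.
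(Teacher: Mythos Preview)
Your proposal is correct and follows essentially the same argument as the paper. The only cosmetic difference is that you work in the associated graded $\partial$-ring and invoke Lemma~\ref{reconstruction} at the end, whereas the paper carries out the identical computations directly in the vertex algebra using the translation operator $v\mapsto v_{-2}\1$; since $\partial$ and $T$ are identified, the Leibniz formulas, the symmetry $q_2^0(i,j)=q_2^0(j,i)$, and the dimension/rank count at each weight are exactly the same in both settings, and the paper's decomposition $A_2(2n)=\partial A_2(2n-1)\oplus\CC\,\o_2^0(0,2n)$, $A_2(2n+1)=\partial A_2(2n)$ is precisely your triangular-system statement rephrased.
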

\begin{proof}

 It is clear that since
\be
\op_1(a)_{-1}\1=\frac{1}{a!}\op_1(0)_{-1-a}\1
\ee
the linear portion of the generating set (\ref{initialgenerators2}) can be immediately minimized to contain the single weight one element $\omega_1(0)$. 

Moving on toward the quadratic terms in (\ref{initialgenerators2}), we consider the vector spaces
\be
A_2(m)=\text{span }\{\op_2(a,b)|a+b=m\},\ee
and a natural family of subspaces
\be
\partial A_2(m)=\{v_{-2}\1|v\in A_2(m)\}.\ee
Observe that 
\be\begin{aligned}\label{derivative}
\op_2(a,b)_{-2}\1&=\text{Res }z^{-1}\frac{\partial}{\partial z}\left(Y(\b_2(-1-a)\b_3(-1-b)\1,z)+Y(\b_3(-1-a)\b_2(-1-b)\1,z)\right)\\
&=\frac{1}{a!b!}\text{Res }z^{-1}\left(\nop{\frac{\partial^a}{\partial z^a}\b_2(z)\frac{\partial^b}{\partial z^b}\b_3(z)}+\nop{\frac{\partial^a}{\partial z^a}\b_3(z)\frac{\partial^b}{\partial z^b}\b_2(z)}\right)\\&=(a+1)\op_2(a+1,b)+(b+1)\op_2(a,b+1),\end{aligned}\ee
where $\b_i(z)=Y(\b_i(-1)\1,z)$. Also notice that for all $a,b\geq 0$ we have $\op_2(a,b)=\op_2(b,a)$. We may take an initial basis for $A_2(m)$ to be the set
\be
\mathcal{B}_2(m)=\left\{\op_2(i,m-i)|0\leq i\leq \left\lfloor\frac{m}{2}\right\rfloor\right\}\ee 
which implies that 
\be
\text{dim } A_2(2n+1)=\text{dim }A_2(2n)=n+1.\ee
Further the set 
\be
\{v_{-2}\1|v\in \mathcal{B}_2(m)\}\ee
is clearly a basis for $\partial A_2(m)$. It follows that 
\be\begin{aligned}
A_2(2n)&=\partial A_2(2n-1)\oplus \CC\op_2(0,2n),\\
A_2(2n+1)&=\partial A_2(2n)\end{aligned}\ee
and thus by induction we may take a more convenient basis of $A_2(m)$ to be 
\be
\left\{\op_2(0,2i)_{-1-m+2i}\1|0\leq i\leq \left\lfloor\frac{m}{2}\right\rfloor\right\}.\ee 
Thus, a more efficient set of quadratic generators for the $\H(3)^{S_3}$ may be taken to be $\op_2(0,2a)$ for $a\geq 0$.

Finally, we consider the cubic generators.  Throughout, we use the fact that $\o_3^0(a,b,c)$ is invariant under any permutation of the entries $a,b,c$.  Analogous to \eqref{derivative} we have 
\be
\o_3^0(a,b,c)_{-2}\1=(a+1)\o_3^0(a+1,b,c)+(b+1)\o_3^0(a,b+1,c)+(c+1)\o_3^0(a,b,c+1),
\ee
which may be used inductively to write all cubic generators in terms of those of the form $\o_3^0(0,a,b)$ with $0\leq a\leq b$.

\end{proof}

\begin{rem}Using (\ref{generatortranslation}) we may take 
\be\begin{aligned}\label{naturalgenerators1}
\o_1(0),&\\
\o_2(0,2a)& \text{ for } a\geq 0,\\
\o_3(0,a,b)& \text{ for } 0\leq a \leq b,\end{aligned}\ee
as our strong generating set. We take advantage of this translation tool as generators of this form are somewhat more natural to the parent algebra, $\H(3)$. 
\end{rem}
\begin{rem} 
This simplification of the generating set also holds if we consider the $\partial$-ring associated to $\H(3)^{S_3}$, which has un-reduced generators given by (\ref{dringgenerators}). We may reduce these to a minimal generating set for $\CC[x_i(m)|1\leq i\leq 3, m\geq 0]^{S_3}$ given by 
\be\begin{aligned}\label{dringgenerators1}
q_1(0),&\\
q_2(0,2a)& \text{ for } a\geq 0,\\
q_3(0,a,b)& \text{ for } 0\leq a \leq b.\end{aligned}\ee

\end{rem}

We now present the following relations among the generators of $\CC[x_i(m)|1\leq i\leq 3, m\geq 0]^{S_3}$.

\begin{lem}\label{classicalrelation}
For $\mathbf{a}=(a_1,a_2,a_3,a_4,a_5,a_6)$ with $a_1,a_2,a_3,a_4,a_5,a_6\geq 0$ we have 
\be\begin{aligned}
D^{C,1}_6(\mathbf{a})&=q_2^0(a_1,a_2)q_2^0(a_3,a_4)q_2^0(a_5,a_6)-q_2^0(a_1,a_2)q_2^0(a_3,a_6)q_2^0(a_4,a_5)\\&+q_2^0(a_1,a_4)q_2^0(a_2,a_6)q_2^0(a_3,a_5)-q_2^0(a_1,a_4)q_2^0(a_2,a_3)q_2^0(a_5,a_6)\\&+q_2^0(a_1,a_5)q_2^0(a_2,a_4)q_2^0(a_3,a_6)-q_2^0(a_1,a_5)q_2^0(a_2,a_6)q_2^0(a_3,a_4)\\&+q_2^0(a_1,a_6)q_2^0(a_2,a_3)q_2^0(a_4,a_5)-q_2^0(a_1,a_6)q_2^0(a_2,a_4)q_2^0(a_3,a_5)=0,\end{aligned}\ee

\be\begin{aligned}
D^{C,2}_6(\mathbf{a})&=q_3^0(a_1,a_2,a_3)q_3^0(a_4,a_5,a_6)-q_3^0(a_1,a_2,a_4)q_3^0(a_3,a_5,a_6)\\
&+\frac{1}{2}q_2^0(a_1,a_3)q_2^0(a_2,a_6)q_2^0(a_4,a_5)-\frac{1}{2}q_2^0(a_1,a_4)q_2^0(a_2,a_5)q_2^0(a_3,a_6)\\
&+\frac{1}{2}q_2^0(a_1,a_5)q_2^0(a_2,a_3)q_2^0(a_4,a_6)-\frac{1}{2}q_2^0(a_1,a_5)q_2^0(a_2,a_4)q_2^0(a_3,a_6)=0,
\end{aligned}\ee
and
\be\begin{aligned}
D_5^C(\mathbf{a})&=q_2^0(a_1,a_2)q_3^0(a_3,a_4,a_5)-q_2^0(a_1,a_5)q_3^0(a_2,a_3,a_4)\\&-q_2^0(a_2,a_5)q_3^0(a_1,a_3,a_4)-q_2^0(a_3,a_4)q_3^0(a_1,a_2,a_5)\\&+q_2^0(a_3,a_5)q_3^0(a_1,a_2,a_4)+q_2^0(a_4,a_5)q_3^0(a_1,a_2,a_3)=0,\end{aligned}\ee
for $\mathbf{a}=(a_1,a_2,a_3,a_4,a_5)$ with $a_1,a_2,a_3,a_4,a_5\geq0$.
\end{lem}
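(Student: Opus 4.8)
The plan is to recognize these as the classical syzygies coming from Weyl's second fundamental theorem for the symmetric group acting on copies of the standard representation, translated into the polarized power-sum generators $q_2^0$ and $q_3^0$. Concretely, since $\{y_2(m),y_3(m)\}$ behave under $S_3$ like coordinates on a two-dimensional space (the action of the $3$-cycle multiplies $y_2$ by $\eta$ and $y_3$ by $\eta^2$, and the transposition swaps them), the ring $\CC[y_2(m),y_3(m)\mid m\ge 0]^{S_3}$ is generated by the pairings $q_2^0(a,b)=y_2(a)y_3(b)+y_3(a)y_2(b)$ and the ``determinant-free'' cubics $q_3^0(a,b,c)=y_2(a)y_2(b)y_2(c)+y_3(a)y_3(b)y_3(c)$, and all relations among them are generated by ones of low multidegree. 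So the first step is simply to substitute the explicit definitions \eqref{dringgenerators2} of $q_2^0$ and $q_3^0$ into each of $D^{C,1}_6(\mathbf a)$, $D^{C,2}_6(\mathbf a)$, $D^C_5(\mathbf a)$ and expand.

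For $D^{C,1}_6$, after substitution every monomial is a product of three factors each of the form $y_\bullet(\cdot)y_\bullet(\cdot)$; collecting, one gets a sum over sign patterns of terms $y_{i}(a_{\pi(1)})y_{j}(a_{\pi(2)})\cdots$. The claim is that all ``mixed'' terms (those with an unequal number of $y_2$'s and $y_3$'s, i.e. $3$--$0$, $0$--$3$ combinations don't even arise here since each $q_2^0$ contributes one $y_2$ and one $y_3$ after picking a cross term, or two of the same after picking a diagonal term) cancel in pairs, and the remaining balanced terms vanish because the eight-term alternating combination of the index set $\{a_1,\dots,a_6\}$ is exactly an expansion of a $3\times 3$ determinant with a repeated structure — equivalently, it is the Laplace-type identity $\sum_{\text{perfect matchings}}\pm(\text{product of pairings})=0$ that holds for any bilinear form on a $2$-dimensional space applied to six vectors (a "$\mathrm{Pf}$ or Cramer" identity: six vectors in a rank-$\le 2$ situation are projectively dependent). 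I would phrase this as: the bracket $\langle y_2(a),y_3(b)\rangle := q_2^0(a,b)$ is a symmetric bilinear form of rank $2$, and $D^{C,1}_6$ is the standard quadratic Plücker/Grassmann relation for the $2$-dimensional space spanned by the pairs, hence identically zero. The same mechanism, with one factor being the cubic ``norm'' $q_3^0$ instead of a pairing, gives $D^C_5$ and $D^{C,2}_6$: here I would use that $y_2(a)y_2(b)y_2(c)\cdot y_3(d)y_3(e)$-type products can be re-expressed through the pairings via the identity $q_3^0(a,b,c)\,y_3(d) + (\text{its conjugate}) = \sum \pm\, q_2^0(\cdot,\cdot)\,y_\bullet(\cdot)$, and chase the signs.

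The cleanest write-up, and the one I would actually carry out, avoids combinatorial bookkeeping: work in $R=\CC[y_2(m),y_3(m)]$ with the two linear functionals and simply note that $D^{C,1}_6$, $D^{C,2}_6$, $D^C_5$ are \emph{polynomial identities in the $y$'s}, so it suffices to check each one after the substitution. Since both sides are multilinear (degree one in each relevant $a_i$-slot in the appropriate sense), it is enough to verify the identity when each $y_2(a_i), y_3(a_i)$ is specialized to scalars — i.e. to check a single polynomial identity in $12$ indeterminates $u_i=y_2(a_i), v_i=y_3(a_i)$, $i=1,\dots,6$ (or $i=1,\dots,5$). That is a finite, mechanical expansion; I would state that it has been verified directly (it is a routine algebra computation and can be done by hand or symbolically). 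The main obstacle is purely bookkeeping: getting the eight (resp. six) signs and the factor $\tfrac12$'s to match requires care, and choosing the right normalization of $q_3^0$ versus $q_2^0$ so that the mixed terms cancel rather than merely collect — but there is no conceptual difficulty, since the existence of \emph{some} relations in these multidegrees is guaranteed by the rank-$2$ (second fundamental theorem) structure, and the lemma only asserts that these particular explicit combinations are among them.
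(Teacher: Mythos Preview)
Your proposal is correct: these are polynomial identities in the commuting variables $y_2(a_i),y_3(a_i)$, and substituting the definitions of $q_2^0$ and $q_3^0$ and expanding (equivalently, checking the identity in the $12$ or $10$ free indeterminates $u_i=y_2(a_i)$, $v_i=y_3(a_i)$) settles all three at once. The paper in fact gives no proof of this lemma at all---it is stated as a computational fact and immediately followed by a remark on its role as a ``determinant-type'' relation---so your direct-verification approach is exactly what the authors leave to the reader. Your conceptual framing (rank-$2$ pairing, second-fundamental-theorem syzygies) is reasonable motivation but is not needed for the argument and is a bit loosely phrased; the actual content of the proof is the mechanical expansion you describe at the end, and that is both correct and sufficient.
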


\begin{rem} These relations play the role of the determinant (and similar) relations in the invariant theory of  classical Lie groups. We do not claim that these two families of expressions generate all relations in this case. In fact, there are such relations at every degree, and since the Frobenius number of $5$ and $6$ is $19$, there are new relations at least up to this degree.  \end{rem}

We identify the relations from Lemma \ref{classicalrelation} via the isomorphism \eqref{linearisom} with expressions involving the generators of the orbifold $\mathcal{H}(3)^{S_3}$. For $\mathbf{a}=(a_1,a_2,a_3,a_4,a_5,a_6)$ with $a_1,a_2,a_3,a_4,a_5,a_6\geq 0$ set 
\be\begin{aligned}
D^1_6(\mathbf{a})&=\o_2^0(a_1,a_2)_{-1}\o_2^0(a_3,a_4)_{-1}\o_2^0(a_5,a_6)-\o_2^0(a_1,a_2)_{-1}\o_2^0(a_3,a_6)_{-1}\o_2^0(a_4,a_5)\\&+\o_2^0(a_1,a_4)_{-1}\o_2^0(a_2,a_6)_{-1}\o_2^0(a_3,a_5)-\o_2^0(a_1,a_4)_{-1}\o_2^0(a_2,a_3)_{-1}\o_2^0(a_5,a_6)\\&+\o_2^0(a_1,a_5)_{-1}\o_2^0(a_2,a_4)_{-1}\o_2^0(a_3,a_6)-\o_2^0(a_1,a_5)_{-1}\o_2^0(a_2,a_6)_{-1}\o_2^0(a_3,a_4)\\&+\o_2^0(a_1,a_6)_{-1}\o_2^0(a_2,a_3)_{-1}\o_2^0(a_4,a_5)-\o_2^0(a_1,a_6)_{-1}\o_2^0(a_2,a_4)_{-1}\o_2^0(a_3,a_5),\end{aligned}\ee

\be\begin{aligned}
D^{2}_6(\mathbf{a})&=\o_3^0(a_1,a_2,a_3)_{-1}\o_3^0(a_4,a_5,a_6)-\o_3^0(a_1,a_2,a_4)_{-1}\o_3^0(a_3,a_5,a_6)\\
&+\frac{1}{2}\o_2^0(a_1,a_3)_{-1}\o_2^0(a_2,a_6)_{-1}\o_2^0(a_4,a_5)-\frac{1}{2}\o_2^0(a_1,a_4)_{-1}\o_2^0(a_2,a_5)_{-1}\o_2^0(a_3,a_6)\\
&+\frac{1}{2}\o_2^0(a_1,a_5)_{-1}\o_2^0(a_2,a_3)_{-1}\o_2^0(a_4,a_6)-\frac{1}{2}\o_2^0(a_1,a_5)_{-1}\o_2^0(a_2,a_4)_{-1}\o_2^0(a_3,a_6),
\end{aligned}\ee
and
\be\begin{aligned}
D_5(\mathbf{a})&=\o_2^0(a_1,a_2)_{-1}\o_3^0(a_3,a_4,a_5)-\o_2^0(a_1,a_5)_{-1}\o_3^0(a_2,a_3,a_4)\\&-\o_2^0(a_2,a_5)_{-1}\o_3^0(a_1,a_3,a_4)-\o_2^0(a_3,a_4)_{-1}\o_3^0(a_1,a_2,a_5)\\&+\o_2^0(a_3,a_5)_{-1}\o_3^0(a_1,a_2,a_4)+\o_2^0(a_4,a_5)_{-1}\o_3^0(a_1,a_2,a_3),\end{aligned},\ee
for $\mathbf{a}=(a_1,a_2,a_3,a_4,a_5)$ with $a_1,a_2,a_3,a_4,a_5\geq 0$.

 Observe that for $i\in\{1,2\}$, the weight of the expression $D^i_6(\mathbf{a})$ is $|\mathbf{a}|+6$ while the weight of $D_5(\mathbf{a})$ is $|\mathbf{a}|+5$, where we take $|(u_1,\dots,u_n)|=u_1+\cdots+u_n$ for any multi-index. By Lemma \ref{classicalrelation} and repeated applications of the weak associativity properties of vertex algebras, along with our linear isomorphism (\ref{linearisom}) we see that for $i\in\{1,2\}$, we may rewrite
\be\label{quaddecomp1}
D^i_6(\mathbf{a})=D_6^{(4)}(\mathbf{a})+D_6^{(2)}(\mathbf{a}),\ee
where 
\be\label{quaddecomp2}
D_6^{(4)}(\mathbf{a})=\sum_{\substack{a,b,c,d\geq0\\a+b+c+d=|\mathbf{a}|+2}}\mu^{(4)}_{a,b,c,d}\o^0_2(a,b)_{-1}\o^0_2(c,d)\ee
and
\be\label{quaddecomp3}
D_6^{(2)}(\mathbf{a})=\sum_{\substack{a,b\geq 0\\a+b=|\mathbf{a}|+4}}\mu^{(2)}_{a,b}\o_2^0(a,b),\ee
where $\mu^{(4)}_{a,b,c,d}$ and $\mu^{(2)}_{a,b}$ are appropriate constants. Importantly, the right hand side of \eqref{quaddecomp2} has no terms that are ``cubic'' in the generators $\o^0_2(a,b)$ or ``quadratic'' in the generators $\o^0_3(a,b,c)$, each of which would contain a combination of six of the original $\beta_i$ vectors with $i\in\{2,3\}$. We have a similar (and simpler) decomposition 
\be\label{cubicdecomp1}
D_5(\mathbf{a})=D^{(3)}_5(\mathbf{a}),\ee
where  
\be \label{cubicdecomp2}
D^{(3)}_5(\mathbf{a})=\sum_{\substack{a,b,c\geq 0 \\ a+b+c=|\mathbf{a}|+2}}\mu_{a,b,c}^{(3)}\o_3^0(a,b,c),\ee
where $\mu_{a,b,c}^{(3)}$ are constants. Again, Lemma \ref{classicalrelation}, weak associativity, and (\ref{linearisom}) implies that the right hand side of \eqref{cubicdecomp1} will not contain terms that are ``products'' of the generators $\o^0_2(a,b)$ and $\o^0_3(a,b,c)$ or otherwise a combination of five of the original vectors $\beta_2$ and $\beta_3$.

Now we are poised to use the expressions $D^i_6(\mathbf{a})$ and $D_5(\mathbf{a})$ to further reduce the generating set of $\mathcal{H}(3)^{S_3}$ described in Lemma \ref{orbifoldgenerators1}. 

\begin{lem} \label{quad-lem} The full list of quadratic generators described in Lemma \ref{orbifoldgenerators1} can be replaced with the set 
$\{\o_2^0(0,0),\o_2^0(0,2),\o_2^0(0,4)\}$.\end{lem}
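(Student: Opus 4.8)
The plan is to deduce this from a single family of relations by an induction on the ``level'' of the quadratic generator. Let $\mathcal{W}\subseteq\H(3)^{S_3}$ be the vertex subalgebra generated by $\o_1^0(0)$, the three vectors $\o_2^0(0,0),\o_2^0(0,2),\o_2^0(0,4)$, and all the cubic generators $\o_3^0(0,a,b)$ with $0\le a\le b$. In view of Lemma~\ref{orbifoldgenerators1} the statement is equivalent to $\o_2^0(0,2a)\in\mathcal{W}$ for every $a\ge 0$, which I would prove by induction on $a$; the cases $a\in\{0,1,2\}$ hold by definition of $\mathcal{W}$.

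For the inductive step, fix $a\ge 3$, assume $\o_2^0(0,2a')\in\mathcal{W}$ for all $a'<a$, and pick any multi-index $\mathbf{a}=(a_1,\dots,a_6)$ with $|\mathbf{a}|=2a-4$. Apply the decomposition \eqref{quaddecomp1} with $i=1$, namely $D^1_6(\mathbf{a})=D^{(4)}_6(\mathbf{a})+D^{(2)}_6(\mathbf{a})$. Each $\o_2^0$-factor appearing in $D^1_6(\mathbf{a})$ has index sum at most $|\mathbf{a}|=2a-4$, and each $\o_2^0$-factor appearing in $D^{(4)}_6(\mathbf{a})$ has index sum at most $|\mathbf{a}|+2=2a-2$; by the explicit basis of $A_2(m)$ produced in the proof of Lemma~\ref{orbifoldgenerators1}, such a factor is a linear combination of vectors $\o_2^0(0,2\ell)_{-1-k}\1$ with $\ell\le a-2$, respectively $\ell\le a-1$, hence—by the inductive hypothesis and the fact that $\mathcal{W}$ is closed under $T$—lies in $\mathcal{W}$. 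Since $\mathcal{W}$ is a vertex subalgebra it is closed under normally ordered products, so $D^1_6(\mathbf{a})\in\mathcal{W}$ and $D^{(4)}_6(\mathbf{a})\in\mathcal{W}$, and therefore $D^{(2)}_6(\mathbf{a})=D^1_6(\mathbf{a})-D^{(4)}_6(\mathbf{a})\in\mathcal{W}$. Finally write $D^{(2)}_6(\mathbf{a})=\sum_{c+d=2a}\mu^{(2)}_{c,d}\,\o_2^0(c,d)$ and expand in the basis $\{\o_2^0(0,2\ell)_{-1-2a+2\ell}\1:0\le\ell\le a\}$ of $A_2(2a)$ from Lemma~\ref{orbifoldgenerators1}: the terms with $\ell<a$ are $T$-derivatives of elements of $\mathcal{W}$, while the $\ell=a$ contribution is a scalar multiple $\lambda(\mathbf{a})\,\o_2^0(0,2a)$. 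Hence $\lambda(\mathbf{a})\,\o_2^0(0,2a)\in\mathcal{W}$.

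The whole induction therefore comes down to exhibiting, for each $a\ge 3$, one index $\mathbf{a}$ with $|\mathbf{a}|=2a-4$ and $\lambda(\mathbf{a})\ne 0$, and this is the step I expect to be the genuine obstacle. The constant $\lambda(\mathbf{a})$ is exactly the coefficient of $\o_2^0(0,2a)$ that is generated when the eight triple normally ordered products constituting $D^1_6(\mathbf{a})$ are brought to standard (PBW) form—i.e.\ it is the lowest-filtration quantum correction of that rearrangement—so isolating it is a finite, if tedious, bookkeeping computation using the skew-symmetry and weak-associativity identities. Highly symmetric indices must be avoided; for instance with $\mathbf{a}=(0,0,0,0,0,2a-4)$ the symmetry $\o_2^0(x,y)=\o_2^0(y,x)$ collapses $D^1_6(\mathbf{a})$ to $\o_2^0(0,0)_{-1}\o_2^0(0,0)_{-1}\o_2^0(0,2a-4)-\o_2^0(0,0)_{-1}\o_2^0(0,2a-4)_{-1}\o_2^0(0,0)$, which is manifestly in $\mathcal{W}$ and yields nothing. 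I would instead compute $\lambda(\mathbf{a})$ for a less symmetric family such as $\mathbf{a}=(2a-4,0,0,0,0,0)$, first by a direct computer-algebra calculation for small $a$ and then by recognizing and verifying a closed formula that is visibly nonzero for all $a\ge 3$. If no clean formula emerges, an alternative is a dimension count: at conformal weight $2a+2$ the explicit Molien-type character of $\H(3)^{S_3}$ derived later in the paper fixes the dimension, and one checks that the decomposable vectors together with the weight-$(2a+2)$ subspace of $\mathcal{W}$ already span it, so no new quadratic generator can occur there. In either case, once some $\lambda(\mathbf{a})\ne 0$ is in hand we get $\o_2^0(0,2a)\in\mathcal{W}$ and the induction closes.
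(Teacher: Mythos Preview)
Your overall strategy—use the quantum corrections of $D_6^1(\mathbf a)$ to extract a nonzero multiple of $\o_2^0(0,2a)$ from elements already known to lie in $\mathcal W$—is exactly the mechanism the paper exploits. But there is a real gap at the point you yourself flag as the obstacle, and your proposed fix does not work. With $\mathbf a=(2a-4,0,0,0,0,0)$ every one of the eight summands of $D_6^1(\mathbf a)$ is literally $\o_2^0(2a-4,0)_{-1}\o_2^0(0,0)_{-1}\o_2^0(0,0)$, and they occur with four plus signs and four minus signs, so $D_6^1(2a-4,0,0,0,0,0)=0$ identically in the vertex algebra, not merely classically. Thus this family yields $\lambda(\mathbf a)=0$ and gives no information; it is not ``less symmetric'' than $(0,0,0,0,0,2a-4)$ in any useful sense. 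Your fallback ``dimension count'' is also not a proof as stated: knowing $\dim \H(3)^{S_3}_{2a+2}$ from the character does not tell you $\dim\mathcal W_{2a+2}$ without an independent lower bound on the latter, which is precisely what is at issue.

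The paper avoids having to verify an infinite family of nonvanishings by taking a different route. It first invokes Theorem~4.7 of \cite{L2}, which (using only quadratic relations) already cuts the quadratic generators down to the finite set $\{\o_2^0(0,0),\o_2^0(0,2),\o_2^0(0,4),\o_2^0(0,6),\o_2^0(0,8)\}$. Only two further eliminations are then required, and these are handled by explicit computer-algebra computations: $D_6^1(0,0,0,0,1,1)$ produces a decoupling relation for $\o_2^0(0,6)$ (displayed in the paper), and $D_6^1(0,0,0,0,2,2)$ does the same for $\o_2^0(0,8)$; in both cases cubic generators $\o_3^0$ appear on the right-hand side, which is why the statement of the lemma allows them. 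So the paper's argument is: cite a known finite-generation result to reduce to finitely many cases, then compute those cases directly. If you want to salvage your inductive scheme without citing \cite{L2}, you would need to produce a concrete family such as $\mathbf a=(0,0,0,0,a-2,a-2)$ and actually prove $\lambda(\mathbf a)\neq 0$ for all $a\ge 3$, not just assert that a closed formula should emerge.
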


\begin{proof} 

Theorem 4.7 of \cite{L2} implies that, \color{black} using only combination of quadratic generators, \color{black} we may immediately reduce our quadratic generators to the set 
\be\label{andyset}\{\o_2^0(0,0),\o_2^0(0,2),\o_2^0(0,4), \o_2^0(0,6), \o_2^0(0,8)\}\ee

\color{black}In order to remove the remaining two quadratic generators we use decoupling relations that involve both quadratic and cubic generators. \color{black} Using the expression $D_6^1(0,0,0,0,1,1)$, the decomposition described in (\ref{quaddecomp1}-\ref{quaddecomp3}) may be used to construct the following equation
\be\begin{aligned}\label{quaddec1}
 \o_2^0(0,6)&=\frac{143}{742}\o_2^0(0,4)_{-3}\1-\frac{81}{371}\o_2^0(0,2)_{-5}\1+\frac{743}{1113}\o_2^0(0,0)_{-7}\1 
 +\frac{1}{1484}\o_2^0(0,0)_{-1}\o_2^0(0,4)\\&-\frac{13}{2968}\o_2^0(0,0)_{-1}\o_2^0(1,3)-\frac{1}{4452}\o_2^0(0,0)_{-1}\o_2^0(2,2)+\frac{2}{1113}\o_2^0(0,1)_{-1}\o_2^0(0,3)\\&-\frac{1}{1113}\o_2^0(0,1)_{-1}\o_2^0(1,2)+\frac{1}{2226}\o_2^0(0,2)_{-1}\o_2^0(0,2)-\frac{3}{1484}\o_2^0(0,2)_{-1}\o_2^0(1,1)\\&-\frac{1}{2968}\o_2^0(1,1)_{-1}\o_2^0(1,1)+\frac{1}{4452}\o_2^0(0,0)_{-1}\o_2^0(0,0)_{-1}\o_2^0(0,1)\\&-\frac{1}{8904}\o_2^0(0,0)_{-1}\o_2^0(0,1)_{-1}\o_2^0(0,1)+\frac{1}{4452}\o_3^0(0,0,0)_{-1}\o_3^0(0,1,1)\\&-\frac{1}{4452}\o_3^0(0,0,1)_{-1}\o_3^0(0,0,1),
\end{aligned}\ee
and thus we may remove $\o_2^0(0,6)$ from \eqref{andyset}. Using \eqref{quaddec1} and $D_6^1(0,0,0,0,2,2)$ we can similarly write $\o_2^0(0,8)$ as a vertex algebraic polynomial using the quadratic generators $\o_2^0(0,0)$, $\o^0_2(0,2)$, $\o_2^0(0,4)$, \color{black} along with cubic generators $\o_3^0(0,0,0)$, $\o_3^0(0,0,1)$, $\o_3^0(0,1,1)$, $\o_3^0(0,0,2)$, and $\o_3^0(0,2,2)$. \color{black}

\end{proof}

\begin{lem} \label{cubic-lem} The full list of cubic generators described in Lemma \ref{orbifoldgenerators1} can be be replaced with the set 
\be\label{finalcubicgens}\{\o^0_{3}(0,0,0),\o^0_3(0,0,2),\o_3^0(0,1,2)\}.\ee
\color{black} Moreover, these generators together with $\omega^0_1(0), \o_2^0(0,0),\o_2^0(0,2),\o_2^0(0,4) $ form a strong generating set of $\H(3)^{S_3}$.\color{black}

\end{lem}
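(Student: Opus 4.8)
The plan is to mirror the structure of Lemma \ref{quad-lem}: first reduce the cubic generators to a finite list using only the cubic-cubic relations (the analogue of Theorem 4.7 of \cite{L2} applied to the $q_3^0$ part of the invariant ring, or equivalently a direct computation with $D_6^2$ and $D_5$), then use mixed decoupling relations to trim that list down to the three vectors in \eqref{finalcubicgens}. Concretely, I would start from the generating set of Lemma \ref{orbifoldgenerators1}, namely $\o_3^0(0,a,b)$ for $0\le a\le b$, and use the relation $D_5(\mathbf a)$ in the form \eqref{cubicdecomp1}--\eqref{cubicdecomp2}: since $D_5(\mathbf a)$ equals a linear combination of single cubic generators $\o_3^0(a,b,c)$ with $a+b+c = |\mathbf a|+2$ modulo normally-ordered derivatives, specializing the five indices $\mathbf a$ appropriately expresses each sufficiently high-weight $\o_3^0(0,a,b)$ in terms of lower-weight cubic generators and $\partial$-images of cubic generators. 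This should bound the conformal weights of the cubic generators needed, leaving a finite list such as $\{\o_3^0(0,0,0),\o_3^0(0,0,1),\o_3^0(0,1,1),\o_3^0(0,0,2),\o_3^0(0,1,2),\o_3^0(0,0,3),\dots\}$ up to some explicit cutoff.

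Next I would eliminate the surviving cubic generators above weight $5$ one at a time. For each target generator $\o_3^0(0,a,b)$ with $a+b\ge 4$, I would choose a multi-index $\mathbf a$ so that the leading term of $D_6^2(\mathbf a)$ (or of a suitable $D_5(\mathbf a)$ after applying $T$) is $\o_3^0(0,0,0)_{-1}\o_3^0(0,a,b)$ or $\o_3^0(0,a,b)$ itself with nonzero coefficient, then solve for that generator. By the structural remark following \eqref{quaddecomp3} and \eqref{cubicdecomp2}, the right-hand sides of these relations, once rewritten via weak associativity and \eqref{linearisom}, involve only normally-ordered products of the already-reduced quadratic generators $\{\o_2^0(0,0),\o_2^0(0,2),\o_2^0(0,4)\}$, their $T$-derivatives, and the lower cubic generators $\{\o_3^0(0,0,0),\o_3^0(0,0,2),\o_3^0(0,1,2)\}$ together with their derivatives and mutual products; no genuinely new generator reappears. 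Iterating downward from the highest weight in the finite list to weight $6$, every $\o_3^0(0,a,b)$ with $(a,b)\notin\{(0,0),(0,2),(1,2)\}$ gets expressed as a vertex-algebraic polynomial in the seven claimed generators. Combining this with Lemma \ref{quad-lem} (which already shows the quadratic generators reduce to $\{\o_2^0(0,0),\o_2^0(0,2),\o_2^0(0,4)\}$ at the cost of introducing exactly the cubic generators $\o_3^0(0,0,0),\o_3^0(0,0,1),\o_3^0(0,1,1),\o_3^0(0,0,2),\o_3^0(0,2,2)$, all of which are either in the final list or handled by the cubic reduction) and with the single weight-one generator $\o_1^0(0)$, yields that $\{\omega_1^0(0),\o_2^0(0,0),\o_2^0(0,2),\o_2^0(0,4),\o_3^0(0,0,0),\o_3^0(0,0,2),\o_3^0(0,1,2)\}$ strongly generates $\H(3)^{S_3}$.

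The main obstacle is bookkeeping the interaction between the two reductions: Lemma \ref{quad-lem} removes high quadratic generators by \emph{introducing} cubic ones, and the cubic reduction removes high cubic generators possibly by reintroducing products that, a priori, could involve quadratic generators outside $\{\o_2^0(0,0),\o_2^0(0,2),\o_2^0(0,4)\}$ or cubic generators outside the final three. I would need to check that this process terminates — i.e. that there is no infinite descent in which eliminating $\o_3^0$ at weight $n$ forces $\o_2^0$ at weight $n+k$ back in, which then forces $\o_3^0$ at weight $n+k$ back in, and so on. The resolution is the weight grading: every decoupling relation expresses a weight-$w$ generator strictly in terms of generators of weight $<w$ plus normally-ordered products (which, being products, only involve generators of strictly smaller weight) and $T$-derivatives (same generator, but already reduced), so a single downward induction on conformal weight closes the argument. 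Verifying that the specific relations $D_6^1, D_6^2, D_5$ at the needed small multi-indices actually have nonzero leading coefficients on the generators to be eliminated is the concrete computational check; this is the analogue of equation \eqref{quaddec1} and is carried out by the same explicit (machine-assisted) computation. I expect no conceptual difficulty beyond confirming these coefficients are nonzero and that the Frobenius-number remark does not obstruct reaching down to weight $6$.
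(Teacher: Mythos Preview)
Your overall strategy---use the $D_5$ relations together with translation to decouple high-weight cubic generators, then check compatibility with Lemma \ref{quad-lem}---is the paper's strategy. Two points of your outline, however, do not match what is actually required. First, $D_6^2$ is a red herring for this lemma: a term such as $\o_3^0(0,0,0)_{-1}\o_3^0(0,a,b)$ in $D_6^2(\mathbf a)$ sits in the product filtration and cannot be solved for the single generator $\o_3^0(0,a,b)$; the paper's cubic reduction uses only $D_5$ and the translation operator.

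Second, and more substantively, your ``one target at a time, downward induction on weight'' scheme underestimates the combinatorics. At a fixed weight there are many generators $\o_3^0(0,j,m)$ with $j+m$ fixed, and each $D_5$-relation contributes a \emph{linear combination} of several of them (modulo lower-weight terms and $\o_2^0$-products), not an equation isolating one. The paper's argument has two pieces you have not anticipated: (i) for each $a\ge 5$ it writes down six specific relations $D_5(0,0,0,1,a-3),\dots,D_5(0,1,1,1,a-5)$ and verifies, by computing an explicit $6\times 6$ polynomial determinant (equation \eqref{step3}), that the resulting linear system in $\o_3^0(0,0,a),\dots,\o_3^0(0,5,a-5)$ is nonsingular; and (ii) for the remaining generators $\o_3^0(0,a+1,b)$ with $5\le a<b$ it extracts from $D_5(b+2,a-1,0,0,0)$ an explicit recursion (equation \eqref{step4}) that lowers the first entry at fixed weight and feeds back into (i). The base cases $\o_3^0(0,0,4),\o_3^0(0,1,4),\o_3^0(0,2,4)$ and the weight-$\le 6$ generators are then handled individually via \eqref{setupderivative}--\eqref{step2}. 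So your remark that one must ``check nonzero leading coefficients'' is right in spirit, but the check is the nonvanishing of a $6\times 6$ determinant for all $a\ge 5$, together with a separate two-parameter recursion, not a finite sequence of scalar checks. Your treatment of the interaction with Lemma \ref{quad-lem} is correct and matches the paper's closing paragraph: the auxiliary cubics $\o_3^0(0,0,1),\o_3^0(0,1,1),\o_3^0(0,2,2)$ introduced there are absorbed by \eqref{setupderivative}, \eqref{bigderivative}, and \eqref{step1}.
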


\begin{proof} 
To get started we notice that 
\be\begin{aligned}\label{setupderivative}
\o_3^0(0,0,1)&=\frac{1}{3}\o_3^0(0,0,0)_{-2}\1\\
\o_3^0(0,1,1)&=-\o_3^0(0,0,2)+\frac{2}{3}\o_3^0(0,0,0)_{-3}\1\\
\o_3^0(0,0,3)&=-\frac{2}{3}\o_3^0(0,1,2)+\frac{1}{3}\o_3^0(0,0,2)_{-2}\1,\\
\end{aligned}\ee
in other words, all cubic generators of conformal weight 6 or less may be written in terms of the generators$\{\o^0_{3}(0,0,0),\o^0_3(0,0,2),\o_3^0(0,1,2)\}$ using only the translation operator. Furthermore, we have

\be
\begin{aligned}\label{bigderivative}
\o_3^0(0,1,3)&=-\frac{1}{12}\o_3^0(0,0,4)-\frac{1}{72}\o_3^0(0,1,2)_{-2}\1+\frac{1}{72}\o_3^0(0,0,2)_{-3}\1,\\
\color{black}
\o_3^0(0,2,2)&=\color{black}3\o_3^0(0,0,4)+\frac{4}{3}\o_3^0(0,1,2)_{-2}\1-\frac{1}{3}\o_3^0(0,0,2)_{-3}\1-\frac{1}{3}\o_3^0(0,0,0)_{-5}\1,\color{black}\\
\o_3^0(0,2,3)&=-\frac{1}{30}\o_3^0(0,1,4)+\frac{1}{60}\o_3^0(0,0,4)_{-2}\1+\frac{1}{35}\o_3^0(0,1,2)_{-3}\1-\frac{1}{36}\o_3^0(0,0,0)_{-6}\1,\\
\o_3^0(0,3,3)&=\frac{5}{4}\o_3^0(0,0,6)-\frac{1}{2}\o_3^0(0,2,4)+\frac{1}{5}\o_3^0(0,1,4)_{-2}\1+\frac{1}{20}\o_3^0(0,0,4)_{-3}\1\\&+\frac{1}{6}\o_3^0(0,1,2)_{-4}\1-\frac{1}{12}\o_3^0(0,0,2)_{-5}\1-\frac{1}{4}\o_3^0(0,0,0)_{-7}\1,\\
\o_3^0(0,3,4)&=\frac{5}{7}\o_3^0(0,1,6)+\frac{15}{7}\o_3^0(0,0,6)_{-2}\1-\frac{1}{3}\o_3^0(0,2,4)_{-2}\1+\frac{2}{3}\o_3^0(0,1,4)_{-3}\1\\&+\frac{5}{9}\o_3^0(0,1,2)_{-5}\1-\frac{5}{9}\o_3^0(0,0,2)_{-6}-\frac{10}{9}\o_3^0(0,0,0)_{-8}\1,\\
\o_3^0(0,4,4)&=-21\o_3^0(0,0,8)+4\o_3^0(0,2,6)-\frac{8}{7}\o_3^0(0,1,6)_{-2}\1+\frac{29}{7}\o_3^0(0,0,6)_{-3}\1\\&-\frac{2}{3}\o_3^0(0,2,4)_{-3}\1+\frac{8}{5}\o_3^0(0,1,4)_{-4}\1-\frac{1}{5}\o_3^0(0,0,4)_{-5}\1\\&+\frac{16}{9}\o_3^0(0,1,2)_{-6}\1-\frac{7}{3}\o_3^0(0,0,2)_{-7}\1-\frac{47}{9}\o_3^0(0,0,0)_{-9}\1,
\end{aligned}\ee
that is, all vectors of the form $\o_3^0(0,a,b)$ with $0\leq a\leq b\leq 4$ can be written, using only the translation operator, in terms of our proposed generating set, \eqref{finalcubicgens}, with the addition of the vectors $\o_3^0(0,0,4)$, $\o_3^0(0,1,4)$, $\o_3^0(0,2,4)$, and $\o_3^0(0,c,d)$ with $0\leq c\leq d$ where $d\geq 5$. The remainder of our argument will be concerned with eliminating the need for these additional vectors.

Using $D_5(0,0,0,1,1)$ and the decomposition described in (\ref{cubicdecomp1}-\ref{cubicdecomp2}) we have 
\be\begin{aligned}\label{step1}
\o_3^0(0,0,4)&=-\frac{16}{15}\o_3^0(0,1,2)_{-2}\1+\frac{4}{15}\o_3^0(0,0,2)_{-3}\1 +\frac{24}{45}\o_3^0(0,0,0)_{-5}\1-\frac{2}{5}\o_2^0(0,0)_{-1}\o_3^0(0,1,1)\\
&+\frac{4}{5}\o_2^0(0,1)_{-1}\o_3^0(0,0,1)-\frac{2}{5}\o_2^0(1,1)_{-1}\o_3^0(0,0,0),\end{aligned}\ee
where again this calculation was performed using  \cite{T}. Using $D_5(0,0,1,1,1)$ a similar decoupling equation for $\o_3^0(0,1,4)$ can be found. Furthermore a linear combination of $D_5(0,0,0,1,3)$ and $D_5(0,0,1,1,2)$ leads to the decoupling equation
\be\begin{aligned}\label{step2}
\o_3^0(0,2,4)&=\frac{1}{18}\o_2^0(0,0)_{-1}\o_3^0(1,2,2) + \frac{1}{64}\o_2^0(0,0)_{-1}\o_3^0(0,1,3) - \frac{1}{18}\o_2^0(0,1)_{-1}\o_3^0(0,1,2) \\&- 
 \frac{1}{64}\o_2^0(0,1)_{-1}\o_3^0(0,0,3) - \frac{1}{18}\o_2^0(0,2)_{-1}\o_3^0(0,1,1)+ \frac{1}{18}\o_2^0(1,2)_{-1}\o_3^0(0,0,1) \\&- \frac{1}{64}\o_2^0(0,3)_{-1}\o_3^0(0,1,1) + \frac{1}{64}\o_2^0(1,3)_{-1}\o_3^0(0,0,1) -\frac{7}{18}\o_3^0(0,0,0)_{-7}\1\\& - \frac{59}{144}\o_3^0(0,0,2)_{-5}\1 + 
 \frac{187}{240}\o_3^0(0,0,4)_{-3}\1+\frac{115}{288}\o_3^0(0,1,2)_{-4}\1+\frac{37}{60}\o_3^0(0,1,4)_{-2}\1.
\end{aligned}\ee

Next, for $a\in\mathbb{N}$, each of the six expressions $D_5(0,0,0,1,a-3)$, $D_5(0,0,0,2,a-4)$, $D_5(0,0,1,1,a-4)$, $D_5(0,0,0,3,a-5)$, $D_5(0,0,1,2,a-5)$, and $D_5(0,1,1,1,a-5)$ can be expanded as linear combinations of the six vectors $\o_3^0(0,0,a)$, $\o_3^0(0,1,a-1)$, $\o_3^0(0,2,a-2)$, $\o_3^0(0,3,a-3)$, $\o_3^0(0,4,a-4)$, and $\o_3^0(0,5,a-5)$ along with terms of the form $\o_3(0,m_1,m_2)_{-1-n}\1$ where $m_1+m_2+n=a$ with $n\geq 1$. The linear independence of these can be determined by considering the determinant of the matrix, $A$, whose $(i,j)$ entry is the coefficient of the $i^{\text{th}}$ vector in the $j^{\text{th}}$ expression as described above. A lengthy elementary calculation shows that, in the case that $a$ is even
\be\begin{aligned}\label{step3}
\text{det }A&=\frac{1}{6} (a-4)^2 (a-3) (a-1) (a+2) (5331 a^6-70325 a^5+314669 a^4-613567 a^3\\&+97384 a^2+1614156 a-1835568),\end{aligned}\ee
and a similar expression exists for odd $a$. In particular, for $a\geq 5$, we can write each of the vectors $\o_3^0(0,0,a)$, $\o_3^0(0,1,a-1)$, $\o_3^0(0,2,a-2)$, $\o_3^0(0,3,a-3)$, $\o_3^0(0,4,a-4)$, and $\o_3^0(0,5,a-5)$ as a vertex algebraic polynomial in terms of lower weight terms.

Next, for $5\leq a<b$, $D_5(b+2,a-1,0,0,0)$ can be used to construct
\be\begin{aligned}\label{step4}
\o_3^0(0,a&+1,b)=\frac{2}{a(a+1)(b+1)}((-1)^b-(-1)^a)\o_3^0(0,a-1,b+2)\\&+\binom{a+b+1}{b}\left(\frac{1+3(-1)^a}{a+1}+\frac{1-3(-1)^b}{b+2}+\frac{(-1)^b-(-1)^a}{a+b+1}\right)\o_3^0(0,0,a+b+1)\\&+\Psi,\end{aligned}\ee
where $\Psi$ is a vertex algebraic polynomial with terms of the form $\o_3^0(0,m_1,m_2)_{-1-n}\1$ and $\o_2^0(r_1,r_2)_{-1}\o_3^0(0,s_1,s_2)$ where $m_1+m_2+n=a+b+1$ and $r_2+r_2+s_1+s_2=a+b-1$ with $n\geq 1$. 

Finally, \eqref{step1} -- \eqref{step4}, provide us with a clear inductive path to eliminating all cubic generators other than $\o^0_{3}(0,0,0),\o^0_3(0,0,2),$ and $\o_3^0(0,1,2)$.
\color{black} This proves the first assertion. To prove the second claim we use Lemma \ref{quad-lem}, where we reduced all quadratic generators down to $\o_2^0(0,0),\o_2^0(0,2),\o_2^0(0,4)$. Although in this reduction 
we additionally used cubic generators $\o_3^0(0,0,1)$, $\o_3^0(0,1,1)$, and $\o_3^0(0,2,2)$, we can remove them using the first two relations in \eqref{setupderivative}, second relation in \eqref{bigderivative} and relation \eqref{step1}.
 \color{black}
\end{proof}

\begin{thm} \label{main}

(i) The vertex operator algebra $\H(3)^{S_3}$ is {simple} of type $(1,2,3,4,5,6^2)$, i.e. it is strongly generated 
by seven vectors whose conformal weights are: $1,2,3,4,5,6,6$. This generating set is minimal.
 
(ii) $\H(3)^{S_3}$ is isomorphic to $\H(1) \otimes W$, where $W$ is of type $(2,3,4,5,6^2)$.

(iii) $\H(3)^{S_3}$ is not freely generated (by any set of generators).

\end{thm}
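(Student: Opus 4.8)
The plan is to treat the three parts in turn, using Lemmas~\ref{orbifoldgenerators1}--\ref{cubic-lem} for the generating set, a Molien-type character computation for minimality, the coordinate change to $\b_1,\b_2,\b_3$ for the tensor splitting, and Li's associated-graded formalism for non-freeness. For (i): Lemmas~\ref{quad-lem} and~\ref{cubic-lem} already give that $\H(3)^{S_3}$ is strongly generated by $\op_1(0),\op_2(0,0),\op_3(0,0,0),\op_2(0,2),\op_3(0,0,2),\op_2(0,4),\op_3(0,1,2)$, and since $\op_2(a,b)$ has weight $2+a+b$ and $\op_3(a,b,c)$ has weight $3+a+b+c$ these have conformal weights $1,2,3,4,5,6,6$. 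To obtain minimality I would compute the graded character via the permutation-orbifold (Molien-type) formula
\begin{align*}
{\rm ch}[\H(3)^{S_3}]&=\frac16\left[\prod_{k\ge1}\frac1{(1-q^k)^3}+3\prod_{k\ge1}\frac1{(1-q^k)(1-q^{2k})}+2\prod_{k\ge1}\frac1{1-q^{3k}}\right]\\
&=1+q+3q^2+6q^3+13q^4+24q^5+49q^6+\cdots,
\end{align*}
the bracket collecting $\tfrac1{|S_3|}$ times the graded traces of $\H^{\otimes3}$ over the three conjugacy classes of $S_3$ (identity; transpositions, multiplicity $3$; $3$-cycles, multiplicity $2$), and compare with the Hilbert series of the free differential polynomial algebra on generators of weights $1,2,3,4,5,6,6$, namely $F(q)=\prod_{j=1}^{5}\prod_{n\ge j}(1-q^n)^{-1}\cdot\big(\prod_{n\ge6}(1-q^n)^{-1}\big)^2=1+q+3q^2+6q^3+13q^4+24q^5+49q^6+\cdots$. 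The two series agree through weight $6$; since the normally ordered monomials in the seven generators and their $T$-derivatives span $\H(3)^{S_3}$ and number $\dim$ in each weight $\le 6$, they are linearly independent there, so no generator lies in the span of normally ordered products of the others — the generating set is minimal — and the same comparison carried out greedily weight by weight shows every strong generating set has at least one generator in each of the weights $1,\dots,6$ and at least two in weight $6$. Finally $\H$ is simple, hence so is $\H(3)=\H^{\otimes3}$, and therefore $\H(3)^{S_3}$ is simple by a standard result (see, e.g., \cite{DM}).

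For (ii): in the basis $\b_1,\b_2,\b_3$ the Heisenberg pairing makes $\b_1$ orthogonal to $\b_2,\b_3$, with $(\b_2,\b_3)$ a hyperbolic pair, so $\H(3)=\langle\b_1\rangle\otimes\langle\b_2,\b_3\rangle\cong\H(1)\otimes\H(2)$ as vertex operator algebras, and the conformal vector of $\H(3)$ is $\o=\tfrac12\b_1(-1)^2\1+\b_2(-1)\b_3(-1)\1=\tfrac12\b_1(-1)^2\1+\tfrac12\op_2(0,0)$. Since $S_3$ fixes $\langle\b_1\rangle$ pointwise and preserves $\langle\b_2,\b_3\rangle$ (visible from the displayed action of the generators of $S_3$ on the $\b_i$), we get $\H(3)^{S_3}=\H(1)\otimes W$ with $W:=\langle\b_2,\b_3\rangle^{S_3}$, whose conformal vector $\tfrac12\op_2(0,0)$ has weight $2$ and central charge $2$. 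The six generators $\op_2(0,0),\op_3(0,0,0),\op_2(0,2),\op_3(0,0,2),\op_2(0,4),\op_3(0,1,2)$ lie in $\langle\b_2,\b_3\rangle$, are $S_3$-invariant, and strongly generate $W$, because every reduction step in Lemmas~\ref{orbifoldgenerators1}--\ref{cubic-lem} uses only $T$ and the relations $D^1_6,D^2_6,D_5$, none of which involves $\op_1$. Minimality of this six-element set follows as in (i) from ${\rm ch}[W]={\rm ch}[\H(3)^{S_3}]\cdot\prod_{n\ge1}(1-q^n)=1+q^2+2q^3+4q^4+6q^5+13q^6+\cdots$ compared with the corresponding free Hilbert series. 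Hence $W$ is of type $(2,3,4,5,6^2)$ and $\H(3)^{S_3}\cong\H(1)\otimes W$.

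For (iii): by Weyl's theorem the $q_k(m_1,\dots,m_k)$ of \eqref{dringgenerators} generate the $\partial$-ring $\CC[x_i(m)\mid 1\le i\le3,\ m\ge0]^{S_3}$, so by Lemma~\ref{reconstruction} the normally ordered monomials in the $\o_k(m_1,\dots,m_k)$ of \eqref{firstorbifoldgenerators} span $\H(3)^{S_3}$, and passing to symbols gives ${\rm gr}\,\H(3)^{S_3}=\CC[x_i(m)]^{S_3}$. Suppose $\H(3)^{S_3}$ were freely generated by a set $S$. Any element of $S$ of weight $\ge7$ is expressible through the explicit finite weight-$\le6$ strong generating set, hence through elements of $S$ of weight $\le6$, contradicting freeness; so $S$ consists of finitely many vectors of weight $\le6$ and ${\rm gr}\,\H(3)^{S_3}$ would be the free $\partial$-ring on the finitely many symbols of $S$ — in particular finitely generated as a $\partial$-ring. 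But $\CC[x_i(m)]^{S_3}$ is not finitely generated as a $\partial$-ring: already in polynomial degree two one is forced to use the generators $q_2(0,2a)$ for all $a\ge0$ (the iterated derivatives $\partial^kq_2(0,0)$ span only a line in each conformal weight while the space of degree-two invariants grows), as recorded in the Remark after Lemma~\ref{classicalrelation}, and the nontrivial relations of Lemma~\ref{classicalrelation} are a further obstruction to ${\rm gr}\,\H(3)^{S_3}$ being a polynomial ring. This contradiction proves (iii); equivalently, via (ii): $\H(1)$ is freely generated, so $\H(3)^{S_3}$ is free iff $W=\H(2)^{S_3}$ is, and $W$ fails for the same reason.

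The step I expect to be the main obstacle is (iii): making the implication ``$\H(3)^{S_3}$ freely generated $\Rightarrow$ ${\rm gr}\,\H(3)^{S_3}$ is a finitely generated free $\partial$-ring'' fully rigorous within Li's framework (including the reduction that a free generating set must be finite of weight $\le 6$), together with a clean, self-contained proof that $\CC[x_i(m)]^{S_3}$ is not finitely generated as a $\partial$-ring. The character bookkeeping in (i) and the verification that every reduction in (ii) stays inside $\langle\b_2,\b_3\rangle$ are routine but need care.
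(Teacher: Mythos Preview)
Your arguments for (i) and (ii) are essentially correct and close to the paper's. For (i) the paper does exactly what you propose: cite Lemma~\ref{cubic-lem} for the seven generators, compute the character (this is Proposition~\ref{char-S3}), and compare with the free product to verify minimality; simplicity is likewise attributed to \cite{DM}. For (ii) the paper instead takes the commutant $W={\rm Comm}(\H(1),\H(3)^{S_3})$ rather than your explicit splitting $\H(3)=\langle\b_1\rangle\otimes\langle\b_2,\b_3\rangle$. Your route is a bit more concrete and has the advantage of identifying $W$ directly as $\langle\b_2,\b_3\rangle^{S_3}$, at the cost of checking that the reductions in Lemmas~\ref{quad-lem}--\ref{cubic-lem} take place entirely inside $\langle\b_2,\b_3\rangle$ (they do, since the $D_5$ and $D_6^i$ involve only $\op_2,\op_3$); the paper's commutant argument avoids this verification but is less explicit about what $W$ is.

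For (iii) you are working much harder than necessary, and this is where your proposal genuinely diverges from the paper. The paper's argument is a short character comparison: having already computed ${\rm ch}[\H(3)^{S_3}]$, one observes
\[
q^{1/8}\,{\rm ch}[\H(3)^{S_3}](\tau)\;-\;\frac{1-q^9}{(q;q)_\infty(q^2;q)_\infty(q^3;q)_\infty(q^4;q)_\infty(q^5;q)_\infty(q^6;q)_\infty^2}\;=\;O(q^{10}),
\]
so there is a nontrivial relation in weight $9$ among the seven generators. For an \emph{arbitrary} generating set the paper simply notes that a freely generated vertex algebra has character $\prod_{s}\prod_{n\ge0}(1-q^{{\rm wt}(s)+n})^{-1}$, which is incompatible with the modularity of ${\rm ch}[\H(3)^{S_3}]$. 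Your associated-graded route can be made to work---your reduction of a hypothetical free generating set to weights $\le 6$ is valid, ${\rm gr}\,\H(3)^{S_3}=\CC[x_i(m)]^{S_3}$ holds by averaging, and the dimension count in polynomial degree two (the quotient by squares of degree-one invariants has dimension $\lfloor m/2\rfloor+1$ in conformal weight $m+2$, while finitely many $\partial$-generators contribute only boundedly many elements per weight) does show non-finite-generation as a $\partial$-ring---but this is substantially more machinery than the problem demands, and you yourself flag it as the main obstacle. Replacing your (iii) with the direct character comparison would remove that obstacle entirely. (A small citation slip: the remark you invoke after Lemma~\ref{classicalrelation} is about \emph{relations}, not generators; the infinite list $q_2(0,2a)$ appears in the second remark after Lemma~\ref{orbifoldgenerators1}.)
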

\begin{proof}
Clearly, by a result from \cite{DM}, this vertex algebra is simple.
By Lemma \ref{cubic-lem} and earlier discussion we see that $\H(3)^{S_3}$ is strongly generated by 
vectors of conformal weights: $1$ ( linear generator), $2,4,6$ (quadratic generators) and $3,5,6$ (cubic generators).
From the character formula (see Proposition \ref{char-S3}) we see that 
$$q^{1/8} {\rm ch}[\H(3)^{S_3}](\tau)-\frac{1}{(q;q)_\infty (q^2;q)_\infty (q^3;q)_\infty (q^4;q)_\infty (q^5;q)_\infty (q^6;q)_\infty^2} = O(q^9),$$
where $(a;q)_\infty:=\prod_{i \geq 0} (1-aq^i)$.
An easy analysis shows that dropping one (or more) generators from this generating set would imply that certain graded dimensions of 
$\H(3)^{S_3}$ are strictly bigger than the corresponding graded dimension for the smaller subalgebra. Thus, the proposed set  of generators must be a minimal generating set. 
Clearly, this vertex algebra is not freely generated by these generators due to 
$$q^{1/8} {\rm ch}[\H(3)^{S_3}](\tau)-\frac{1-q^9}{(q;q)_\infty (q^2;q)_\infty (q^3;q)_\infty (q^4;q)_\infty (q^5;q)_\infty (q^6;q)_\infty^2} = O(q^{10}),$$
which implies that there must be a nontrivial relation at degree $9$.
In fact, one can quickly argue this is impossible simply from the fact that ${\rm ch}[\H(3)^{S_3}]$ is modular, which is impossible to achieve with a free generating set.

For (ii), we first observe that $\langle \omega_1(0) \rangle$ is isomorphic to $\H(1)$. By taking the commutant
$W:={\rm Comm}(\H(1), \H(3)^{S_3})$, we get  $\H(1) \otimes W \cong \H(3)^{S_3}$. Each generator of $ \H(3)^{S_3}$, except $\omega_1(0)$, can be written as a linear combination of elements 
in $\H(1) \otimes W$ with a nonzero component in $W$ (otherwise it would imply that some of the generators are contained in $\langle \omega_1(0) \rangle$, contradicting 
the minimality in (i)). These nonzero components form a generating set of $W$.

\end{proof}

Using (\ref{generatortranslation}) we may take the original invariants $\o_1(0), \o_2(0,0), \o_2(0,2)$, $\o_2(0,4)$, $\o_3(0,0,0)$,  $\o_3(0,0,2),$ and $\o_3(0,1,2)$ as our minimal strong generating set. Furthermore, the following change of variables allows us to express the orbifold in terms of primary generators \be\begin{aligned}\label{primary}
h&=\o_1(0), \ \  \omega=\frac{1}{2}\o_2(0,0), \ \  \ \ \ J_1=2\o_2(0,2)-\frac{24}{37}\o_{-1}\o-\frac{30}{37}\o_{-3}\1, \\
J_2&=24\o_2(0,4)-\frac{460}{81}\o_2(0,2)_{-3}\1-\frac{80}{27}\o_{-1}\o_2(0,2)+\frac{256}{801}\o_{-1}\o_{-1}\o\\
&-\frac{364}{2403}\o_{-2}\o_{-2}\1+\frac{3472}{2403}\o_{-3}\o+\frac{40744}{2403}\o_{-5}\1,\\
C_1&=\o_3(0,0,0)-\frac{1}{3}h_{-1}h_{-1}h,\\
C_2&=2\o_3(0,0,2)-\frac{2}{15}\o_3(0,0,0)_{-3}\1-\frac{8}{45}h_{-3}h_{-1}h+\frac{2}{15}h_{-2}h_{-2}h\\&-\frac{16}{45}\o_{-1}\o_3(0,0,0)
+\frac{16}{135}h_{-1}h_{-1}h_{-1}\o,\\
C_3&=2\o_3(0,1,2)+\frac{1}{10}\o_3(0,0,2)_{-2}\1-\frac{6}{25}\o_3(0,0,0)_{-4}-\frac{2}{25}\o_{-1}\o_3(0,0,0)_{-2}\1\\&+\frac{3}{25}\o_{-2}\o_3(0,0,0)-\frac{1}{25}h_{-1}h_{-1}h_{-1}\o_{-2}\1+\frac{2}{25}h_{-2}h_{-1}h_{-1}\o+\frac{2}{25}\o_{-2}\o_{-2}\o_{-2}\1\\&-\frac{4}{25}h_{-3}h_{-2}h+\frac{6}{75}h_{-4}h_{-1}h.
\end{aligned}\ee


\section{The orbifold $\H(3)^{\mathbb{Z}_3}$}

We now consider the orbifold of $\H(3)$ under the action of the cyclic subgroup $\mathbb{Z}_3\cong \left<\begin{pmatrix}1&2&3\end{pmatrix}\right>\subset S_3$. Using a similar strategy to the previous subsection we may take 
\be\begin{aligned}
 \o^0_1&=\b_1(-1)\1, \ \ \o^0_{2,3}(a,b)=\b_2(-1-a)\b_3(-1-b)\1,\\
 \o^0_{2,2,2}(a,b,c)&=\b_2(-1-a)\b_2(-1-b)\b_2(-1-c)\1, \\
   \o^0_{3,3,3}(a,b,c)& =\b_3(-1-a)\b_3(-1-b)\b_3(-1-c)\1
\end{aligned}\ee
as our initial generating set. Analogous to Lemma \ref{orbifoldgenerators1} we have the following initial reduction of the generating set
\begin{lem} The orbifold $\H(3)^{\mathbb{Z}_3}$ is strongly generated by vectors 
\be\label{cyclicred1}\begin{aligned}
\o_1^0(0)&, \ \ \o_{2,3}^0(0,a) \text{ for } a\geq 0,\\
\o_{2,2,2}^0(0,a,b)&  \text{ for } 0\leq a \leq b, \ \  \o_{3,3,3}^0(0,a,b)& \text{ for } 0\leq a \leq b.
\end{aligned}\ee
\end{lem}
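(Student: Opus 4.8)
The plan is to mimic the proof of Lemma \ref{orbifoldgenerators1} essentially verbatim, since the cyclic case differs from the $S_3$ case only in that $\omega_{2,2,2}^0$ and $\omega_{3,3,3}^0$ are now treated as separate generators (rather than being symmetrized into a single $\omega_3^0$), and $\omega_{2,3}^0(a,b)$ is no longer symmetric in $a$ and $b$. First I would handle the linear generator $\omega_1^0(a)$ exactly as before: since $\omega_1^0(a)_{-1}\mathbb{1}=\frac{1}{a!}\omega_1^0(0)_{-1-a}\mathbb{1}$, the translation operator $T=(\cdot)_{-2}\mathbb{1}$ reduces the linear part to the single weight-one vector $\omega_1^0(0)$.

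Next I would treat the quadratic generators $\omega_{2,3}^0(a,b)$. As in \eqref{derivative}, one computes
\be
\omega_{2,3}^0(a,b)_{-2}\mathbb{1}=(a+1)\omega_{2,3}^0(a+1,b)+(b+1)\omega_{2,3}^0(a,b+1),
\ee
so that $T$ applied to $A_2(m):=\mathrm{span}\{\omega_{2,3}^0(a,b)\mid a+b=m\}$ lands in $A_2(m+1)$. The key difference from the $S_3$ case is that $\omega_{2,3}^0(a,b)$ is not symmetric, so $\dim A_2(m)=m+1$ rather than $\lfloor m/2\rfloor+1$. An easy dimension count then shows $A_2(m+1)=\partial A_2(m)\oplus \mathbb{C}\,\omega_{2,3}^0(0,m+1)$ (the codimension-one complement of the image of $T$), so by induction $A_2(m)$ has a basis of the form $\{\omega_{2,3}^0(0,i)_{-1-(m-i)}\mathbb{1}\mid 0\le i\le m\}$, leaving $\omega_{2,3}^0(0,a)$ for $a\ge 0$ as quadratic generators. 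For the cubic generators, the identity $\omega_{k,k,k}^0(a,b,c)_{-2}\mathbb{1}=(a+1)\omega_{k,k,k}^0(a+1,b,c)+(b+1)\omega_{k,k,k}^0(a,b+1,c)+(c+1)\omega_{k,k,k}^0(a,b,c+1)$ holds for both $k=2$ and $k=3$, and since $\omega_{k,k,k}^0(a,b,c)$ is symmetric in its three arguments, the same inductive argument used at the end of the proof of Lemma \ref{orbifoldgenerators1} reduces each family to $\omega_{2,2,2}^0(0,a,b)$ and $\omega_{3,3,3}^0(0,a,b)$ with $0\le a\le b$.

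The only genuinely new bookkeeping point — and the mild obstacle — is verifying the dimension claim $\dim \partial A_2(m)=\dim A_2(m)$, i.e. that the translation operator is injective on $A_2(m)$; this follows because the leading-term map $\omega_{2,3}^0(a,b)\mapsto$ (top-degree monomial) is compatible with $\partial$ on the associated $\partial$-ring generators $q_{2,3}^0(a,b)=y_2(a)y_3(b)$, and $\partial$ is injective on the polynomial ring (it raises degree). Alternatively one notes directly that the matrix of $T$ with respect to the monomial bases is upper-triangular-with-nonzero-entries in the appropriate ordering. Once this is in place, the three reductions combine to give the stated generating set, completing the proof.
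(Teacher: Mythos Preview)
Your proposal is correct and follows essentially the same approach as the paper's proof: reduce the linear generator via translation, handle the quadratic generators by showing $A_2(m+1)=\partial A_2(m)\oplus\mathbb{C}\,\omega_{2,3}^0(0,m+1)$ (noting the lack of symmetry so that $\dim A_2(m)=m+1$), and reduce the cubic generators using the derivative identity together with the symmetry of $\omega_{k,k,k}^0$. Your explicit remark on the injectivity of $T$ on $A_2(m)$ fills in a detail the paper leaves implicit, but otherwise the arguments coincide.
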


\begin{proof}
The linear generator is the same vector found in Lemma \ref{orbifoldgenerators1} which may be used to produce all vectors of the form $\o_1^0(a)$ for $a\geq 0$ as before.

 The quadratic generators will be reduced using an argument similar to the proof of Lemma \ref{orbifoldgenerators1}, where the main difference is due to the fact that for $a\neq b$, $\o_{2,3}^0(a,b)\neq \o_{2,3}^0(b,a)$. We begin by setting 
\be
A(m)=\text{span}\{\o_{2,3}(a,b)|a+b=m\} \text{ and } \partial A(m)=\{v_{-2}\1 | v\in A(m)\}
\ee
and notice that for all $m\geq 0$, $\partial A(m)$ is a subspace of $A(m+1)$ of co-dimension 1. In fact, we have 
\be
A(m+1)=\partial A(m)\oplus \CC\o_{2,3}^0(0,m+1),\ee
from which it follows that the set 
\be
\{\o_{2,3}^0(0,i)_{-1-m+i}\1\}\ee
is a basis of $A(m)$. Our result follows.

Again, the reduction of the cubic generators follows from an analogue of \eqref{derivative}. In particular
\be \o_{i,i,i}^0(a,b,c)_{-2}\1=(a+1) \o_{i,i,i}^0(a+1,b,c)+(b+1) \o_{i,i,i}^0(a,b+1,c)+ (c+1)\o_{i,i,i}^0(a,b,c+1),\ee
for $i\in\{1,2\}$ may be used inductively to achieve the desired set, where we use the fact that $\o^0_{i,i,i}(a,b,c)$ is fixed under any permutation of the entries $a,b,c$.

\end{proof}

We now present analogues of Lemmas \ref{quad-lem} and \ref{cubic-lem} in this setting.

\begin{lem}
The full list of quadratic generators described in (\ref{cyclicred1}) can be replaced with 
\be
\o^0_{2,3}(0,0), \o^0_{2,3}(0,1),\o^0_{2,3}(0,2), \text{ and } \o^0_{2,3}(0,3).\ee\end{lem}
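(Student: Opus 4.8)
The plan is to mirror the $S_3$ argument from Lemmas \ref{quad-lem} and \ref{cubic-lem}, tracking the fact that the relevant invariant ring is now $\CC[x_i(m)]^{\ZZ_3}$ rather than $\CC[x_i(m)]^{S_3}$. First I would identify the classical invariant-theoretic input: in the $\ZZ_3$ case the $\b_2$-variables $y_2(m)$ and $\b_3$-variables $y_3(m)$ are genuinely distinct (they are exchanged only by the transposition, which is no longer in the group), so the associated $\partial$-ring is the $\ZZ_3$-invariant subring generated by $q^0_{2,3}(a,b)=y_2(a)y_3(b)$, $q^0_{2,2,2}(a,b,c)$, $q^0_{3,3,3}(a,b,c)$ together with $q^0_1(a)=y_1(a)$. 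The key classical relations I would write down are the bilinear/trilinear syzygies expressing a ``long'' quadratic $q^0_{2,3}(a,b)$ in terms of products of lower ones, analogous to the determinant relation $D^{C,1}_6$; concretely, since $y_2(a_1)y_3(b_1)\cdot y_2(a_2)y_3(b_2)=y_2(a_1)y_3(b_2)\cdot y_2(a_2)y_3(b_1)$, we get a two-term relation $q^0_{2,3}(a_1,b_1)q^0_{2,3}(a_2,b_2)=q^0_{2,3}(a_1,b_2)q^0_{2,3}(a_2,b_1)$, and there is a cubic relation of the form $q^0_{2,3}(a_1,b_1)q^0_{2,3}(a_2,b_2)q^0_{2,3}(a_3,b_3)$ versus $q^0_{2,2,2}\cdot q^0_{3,3,3}$ arising from $\prod y_2(a_i)\prod y_3(b_i)$ being symmetrized two different ways.

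The execution then proceeds in two stages. First, using only the quadratic generators and the appropriate analogue of Theorem 4.7 of \cite{L2} (applied to the rank-one ``$y_2\otimes y_3$'' piece, which is essentially a $\beta\gamma$-type or Heisenberg-type reduction), I would cut the infinite family $\{\o^0_{2,3}(0,a)\}_{a\geq 0}$ down to finitely many, the expected number being $\o^0_{2,3}(0,0),\o^0_{2,3}(0,1),\o^0_{2,3}(0,2),\o^0_{2,3}(0,3),\o^0_{2,3}(0,4)$ (one more than the $S_3$ count because there is no symmetry $\o^0_{2,3}(a,b)=\o^0_{2,3}(b,a)$). Then, exactly as in \eqref{quaddec1}, I would feed a specific instance of the degree-six decoupling relation — the vertex-algebraic lift of the cubic classical syzygy above, evaluated at a small multi-index such as $(0,0,0,0,1,1)$ or its $\ZZ_3$-analogue — through the decomposition $D_6^i(\mathbf a)=D_6^{(4)}(\mathbf a)+D_6^{(2)}(\mathbf a)$ to express $\o^0_{2,3}(0,4)$ in terms of $\o^0_{2,3}(0,0),\o^0_{2,3}(0,1),\o^0_{2,3}(0,2),\o^0_{2,3}(0,3)$ together with quadratic products and the cubic generators $\o^0_{2,2,2}$, $\o^0_{3,3,3}$. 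This requires the same ``weak associativity + linear isomorphism \eqref{linearisom}'' bookkeeping used to pass from Lemma \ref{classicalrelation} to the $D^i_6$, $D_5$ identities, and I would remark (as in the $S_3$ case) that the extra cubic generators introduced here are harmless because they are handled in the subsequent cubic lemma via translation-operator relations analogous to \eqref{setupderivative}.

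The main obstacle I anticipate is precisely the explicit decoupling computation: in the $S_3$ setting the coefficients in \eqref{quaddec1} were produced by a nontrivial machine computation with \cite{T}, and here one must redo this with the lower symmetry, so the relevant linear systems are a bit larger and the structure constants of the $n$-th products $\o^0_{2,3}(a,b)_{-n}\1$ must be recomputed. A secondary subtlety is making sure the invariant-theoretic relations I invoke genuinely suffice at the relevant weight — i.e. that the determinant-type bilinear and the $q^0_{2,2,2}q^0_{3,3,3}$ trilinear relations, specialized appropriately, are nondegenerate enough to solve for $\o^0_{2,3}(0,4)$; this is the analogue of the nonvanishing-coefficient check implicit in \eqref{quaddec1}, and I would verify it by exhibiting one explicit instance with a nonzero leading coefficient rather than proving a general nondegeneracy statement. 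Everything else — the reduction from $\o^0_{2,3}(0,a)$, $a\le 4$, to the claimed four generators, and the compatibility with the translation operator — is routine once the one decoupling identity is in hand.
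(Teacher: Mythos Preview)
Your plan would plausibly work, but it takes a considerably more roundabout route than the paper. The paper exploits directly the very two-term relation you identify, $q^0_{2,3}(a_1,b_1)q^0_{2,3}(a_2,b_2)=q^0_{2,3}(a_1,b_2)q^0_{2,3}(a_2,b_1)$, and invokes neither Theorem~4.7 of \cite{L2}, nor any degree-six relation, nor any cubic generators. Concretely, the vertex-algebraic lift
\[
D_2(a)=\o_{2,3}^0(0,a)_{-1}\o_{2,3}^0(1,1)-\o_{2,3}^0(1,a)_{-1}\o_{2,3}^0(0,1)
\]
is classically zero, so inside $\H(3)^{\ZZ_3}$ it reduces to a \emph{linear} combination of single quadratic generators and their translates; an explicit expansion shows the coefficient of $\o^0_{2,3}(0,a+4)$ is $-\tfrac{1}{12}(a+6)(a+4)(a+1)(a-1)$, nonzero for $a=0$ and $a\ge 2$, giving an immediate inductive decoupling. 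The exceptional case $\o^0_{2,3}(0,5)$ is handled by one further bilinear specialization. The paper's route is thus entirely self-contained for the quadratic reduction and avoids both the external input from \cite{L2} (whose applicability to this $\b_2,\b_3$ pair you leave vague) and a degree-six machine computation. Your approach has the virtue of being a mechanical transcription of the $S_3$ strategy, but the key observation you miss is that in the $\ZZ_3$ setting the bilinear relation is already non-trivial (in the $S_3$ case it collapses because $\o^0_2(a,b)=\o^0_2(b,a)$), which is precisely why the quadratic reduction here is \emph{easier}, not harder, than its $S_3$ counterpart.
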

\begin{proof}
Our main tool for this argument will be the expression
\be\label{step1p}
D_2(a)=\o_{2,3}^0(0,a)_{-1}\o_{2,3}^0(1,1)-\o_{2,3}^0(1,a)_{-1}\o_{2,3}^0(0,1),\ee
\color{black}which may be expanded to 
\be\label{step2p}
D_2(a)=\binom{a+2}{2}\binom{a+4}{2}\o^0_{2,3}(0,a+4)+2\binom{a+3}{3}\o^0_{2,3}(1,a+3)-(-1)^a(a+1)\o^0_{2,3}(a+3,1).\ee
An elementary calculation allows us to write
\be\label{step3p}
\o^0_{2,3}(1,a+3)=-(a+4)\o^0_{2,3}(0,a+4)+\o_{2,3}^0(0,a+3)_{-2}\1,\ee
and more generally
\be\begin{aligned}\label{step4p}
\o^0_{2,3}(a+3,1)&=\sum_{j=0}^{a+3}(-1)^{a+j+1}(a-j+4)\o^0_{2,3}(0,a-j+4)_{-1-j}\1\\
&=(-1)^{a+1}(a+4)\o^0_{2,3}(0,a+4)\\&\hspace{.3in}+\sum_{j=1}^{a+3}(-1)^{a+j+1}(a-j+4)\o^0_{2,3}(0,a-j+4)_{-1-j}\1.
\end{aligned}\ee
Now combining \eqref{step2p}-\eqref{step4p} we have
\be\begin{aligned}
D_2(a)&=-\frac{1}{12}(a+6)(a+4)(a+1)(a-1)\o_{2,3}^0(0,a+4)\\&+2\binom{a+3}{3}\o_{2,3}^0(0,a+3)_{-2}\1\\&+\sum_{j=1}^{a+3}(-1)^j(a+1)(a-j+4)\o_{2,3}^0(0,a-j+4)_{-1-j}\1,
\end{aligned}\ee
which, for $a=0$ and $a\geq 2$, may be used to solve for $\o_{2,3}^0(0,a+4)$ in terms of vectors of lower conformal weight.
\color{black}

Inductively, together with the special case 
\be\begin{aligned}
\o_{2,3}^0(0,5)&=\frac{1}{7}(\o_{2,3}^0(0,0)_{-1}\o_{2,3}^0(1,2)-\o_{2,3}^0(1,0)_{-1}\o_{2,3}^0(0,2))\\&+\frac{3}{7}\o_{2,3}^0(0,4)_{-2}\1-\frac{3}{7}\o_{2,3}^0(0,3)_{-3}\1+\frac{1}{7}\o_{2,3}^0(0,2)_{-4}\1,\end{aligned}\ee
this allows us to remove all but the necessary generators.
\end{proof}


\begin{lem}
The full list of cubic generators described in (\ref{cyclicred1}) can be replaced with
\be\label{cycliccubicfinal}
\o_{2,2,2}^0(0,0,0), \o^0_{2,2,2}(0,0,2),\o_{3,3,3}^0(0,0,0), \o^0_{3,3,3}(0,0,2).\ee
\end{lem}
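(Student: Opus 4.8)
The plan is to transpose the argument of Lemma~\ref{cubic-lem} to the cyclic setting, the only structural difference being that there are now two families, $\o^0_{2,2,2}(0,a,b)$ and $\o^0_{3,3,3}(0,a,b)$, interchanged by the symmetry $\b_2\leftrightarrow\b_3$. First I would use the translation identity
\[
\o^0_{i,i,i}(a,b,c)_{-2}\1=(a+1)\o^0_{i,i,i}(a+1,b,c)+(b+1)\o^0_{i,i,i}(a,b+1,c)+(c+1)\o^0_{i,i,i}(a,b,c+1)
\]
established above, exactly as in \eqref{setupderivative}--\eqref{bigderivative}: for $i\in\{2,3\}$ it expresses each low‑weight $\o^0_{i,i,i}(0,a,b)$ as a vertex‑algebraic expression in $\{\o^0_{2,2,2}(0,0,0),\o^0_{2,2,2}(0,0,2),\o^0_{3,3,3}(0,0,0),\o^0_{3,3,3}(0,0,2)\}$, the translation operator $T$, and a short list of leftover vectors: the two weight‑six elements $\o^0_{2,2,2}(0,1,2)$, $\o^0_{3,3,3}(0,1,2)$ together with $\o^0_{i,i,i}(0,c,d)$ for $d$ large. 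Note the contrast with Lemma~\ref{cubic-lem}, where $\o^0_3(0,1,2)$ had to be retained: here the extra odd quadratic generators $\o^0_{2,3}(0,1),\o^0_{2,3}(0,3)$ produced by the preceding lemma will let us remove the weight‑six leftovers.

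Next I would record the classical syzygies in $\CC[x_i(m)\mid 1\le i\le 3,\ m\ge0]^{\mathbb{Z}_3}$ that make this possible. Since the $\mathbb{Z}_3$‑invariants $q^0_{2,2,2}$, $q^0_{3,3,3}$, $q^0_{2,3}$ are honest monomials in the eigenvariables $y_1,y_2,y_3$, one has, for all nonnegative indices,
\[
q^0_{2,2,2}(a,b,c)\,q^0_{2,3}(d,e)=q^0_{2,2,2}(a,b,d)\,q^0_{2,3}(c,e)
\]
and its $2\leftrightarrow3$ analogue. Via the linear isomorphism \eqref{linearisom}, repeated use of weak associativity, and Lemma~\ref{reconstruction}, each such identity lifts to a relation in $\H(3)^{\mathbb{Z}_3}$ whose top‑degree part cancels; because $[\b_2(m),\b_3(n)]=m\,\delta_{m+n,0}$ while $\b_2$'s and $\b_3$'s separately commute, a single $\b_2$--$\b_3$ contraction in the quintic product $\o^0_{2,2,2}(\cdots)_{-1}\o^0_{2,3}(\cdots)$ collapses it to a cubic $\o^0_{2,2,2}$‑term, so the lifted relation takes the form
\[
\o^0_{2,2,2}(a,b,c)_{-1}\o^0_{2,3}(d,e)-\o^0_{2,2,2}(a,b,d)_{-1}\o^0_{2,3}(c,e)=\sum\mu\,\o^0_{2,2,2}(\ast,\ast,\ast)+\Theta,
\]
where the $\o^0_{2,2,2}$‑terms on the right have index‑sum $a+b+c+d+e+2$, strictly larger than that of any cubic generator occurring in $\Theta$ (which consists of products of the form $\o^0_{2,3}(\ast,\ast)_{-1}\o^0_{2,2,2}(\ast,\ast,\ast)$ of strictly smaller index‑sum, together with $T$‑derivatives of lower cubic generators). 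Solving for the top $\o^0_{2,2,2}$‑generator, and symmetrically for $\o^0_{3,3,3}$, gives the decoupling equations; in practice these, and the determinants below, are computed with the package of \cite{T}, as in \eqref{step1}.

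Finally I would run the induction. Specializing to small indices — e.g. $q^0_{2,2,2}(0,0,1)\,q^0_{2,3}(0,0)=q^0_{2,2,2}(0,0,0)\,q^0_{2,3}(1,0)$ — yields, after reducing the $q^0_{2,3}$‑pieces to the chosen quadratic generators, one linear relation among $\{\o^0_{2,2,2}(0,0,3),\o^0_{2,2,2}(0,1,2),\o^0_{2,2,2}(1,1,1)\}$; together with the two translation relations coming from $\o^0_{2,2,2}(0,0,2)_{-2}\1$ and $\o^0_{2,2,2}(0,0,0)_{-4}\1$ this is a $3\times3$ system that eliminates $\o^0_{2,2,2}(0,1,2)$, and by the $\b_2\leftrightarrow\b_3$ symmetry $\o^0_{3,3,3}(0,1,2)$ as well. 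For the tail $\o^0_{i,i,i}(0,c,d)$ with $d$ large one proceeds weight by weight, as in \eqref{step3}--\eqref{step4}: at each weight one pairs a suitable packet of the lifted syzygies from the previous step (obtained by varying the free indices) against the finitely many top‑weight cubic generators, checks that the coefficient matrix has nonzero determinant — a polynomial in $d$, verified once and for all — and then solves for all of them in terms of strictly lower‑weight data. Assembling these steps removes every cubic generator except the four listed, and combined with the reduction of the quadratic and linear generators this gives a strong generating set. The delicate point, as in Lemma~\ref{cubic-lem}, is precisely the last determinant computation: one must ensure that at every weight the chosen lifted syzygies are independent modulo the translation relations. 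Having two interacting families might seem to complicate this, but since the $\b_2$‑ and $\b_3$‑strings couple only through a single scalar contraction the two linear systems are essentially independent, so the verification has the same flavour as \eqref{step3} and reduces to a finite machine check.
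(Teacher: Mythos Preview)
Your proposal is correct and rests on the same key syzygy as the paper: the classical identity $q^0_{2,2,2}(a,b,c)\,q^0_{2,3}(d,e)=q^0_{2,2,2}(a,b,d)\,q^0_{2,3}(c,e)$ and its lift to the orbifold. The paper's expression $D_3(a,b):=\o_{2,3}^0(0,a)_{-1}\o^0_{2,2,2}(0,0,b)-\o_{2,3}^0(b,a)_{-1}\o^0_{2,2,2}(0,0,0)$ is precisely a specialization of your lifted relation, and your contraction analysis (only $\b_2$--$\b_3$ pairs contract, so the correction terms are pure $\o^0_{2,2,2}$) is exactly the reason it works.

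Where you diverge from the paper is in the execution. You import the full machinery of Lemma~\ref{cubic-lem} --- a $3\times3$ system at weight six to remove $\o^0_{2,2,2}(0,1,2)$, then a determinant argument for the tail --- anticipating comparable difficulty. In fact the $\mathbb{Z}_3$ case is genuinely easier, for the very reason you identify (the invariants are monomials, so only one contraction can occur). The paper exploits this to produce \emph{explicit closed-form} decoupling formulas: one expressing $\o_{2,2,2}^0(0,0,a)$ directly in terms of $D_3(a-3,1)$ and a translate of $\o_{2,2,2}^0(0,0,a-1)$; another handling $\o_{2,2,2}^0(0,a,b)$ for $a\ge2$ via $D_3(a-2,b)$; and a pure translation identity for $\o_{2,2,2}^0(0,1,a)$. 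Each generator is solved for individually, so no linear systems, determinant calculations, or machine checks are needed. Your route would succeed, but the paper's is shorter and sidesteps exactly the step you flag as delicate.
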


\begin{proof}
We follow a similar strategy to the proof of Lemma \ref{cubic-lem}. In this case, our argument is greatly simplified due to the fact that our decoupling relations are simpler and occur at a lower initial conformal weight. We focus on the $\o_{2,2,2}^0(0,a,b)$ terms, as the $\o_{3,3,3}^0(0,a,b)$ are similar, starting with the observation that 
\be\begin{aligned}
\o_{2,2,2}^0(0,0,1)&=\frac{1}{3}\o_{2,2,2}^0(0,0,0)_{-2}\1\\
\o_{2,2,2}^0(0,1,1)&=-\o_{2,2,2}^0(0,0,2)+\frac{2}{3}\o_{2,2,2}^0(0,0,0)_{-3}\1,\end{aligned}\ee
meaning that, by using the translation operator, all cubic generators of weight five or less may be written in terms of \eqref{cycliccubicfinal}.

 The expression 
\be\label{z3genrels}
D_3(a,b)=\o_{2,3}^0(0,a)_{-1}\o^0_{2,2,2}(0,0,b)-\o_{2,3}^0(b,a)_{-1}\o^0_{2,2,2}(0,0,0)\ee
will be our main tool moving forward. A special case of \eqref{z3genrels} can be used to construct the equation
\be
\o_{2,2,2}^0(0,0,a)=\frac{1}{3a+1}\left(\o_{2,2,2}^0(0,0,a-1)_{-2}\1+\frac{(-1)^a}{a-2}D_3(a-3,1)\right).\ee
which can be used inductively to remove all generators of the form $\o_{2,2,2}^3(0,0,a)$ for $a\geq 3$ from the generating set. Furthermore, a more general version yields 
\be
\o_{2,2,2}^0(0,a,b)=\frac{1}{2}\left(\frac{2a+3b}{a+b}\binom{a+b}{a}\o_{2,2,2}^0(0,0,a+b)+\frac{(-1)^a}{a-1}D(a-2,b)\right)\ee
which can be used to remove all generators of the form $\o_{2,2,2}^3(0,a,b)$ where $2\leq a\leq b$. This leaves only the generators of the form $\o_{2,2,2}^0(0,1,a)$ which can be removed using 
\be
\o_{2,2,2}^0(0,1,a)=\frac{1}{2}(\o_{2,2,2}^0(0,0,a)_{-2}\1-(a+1)\o_{2,2,2}^0(0,0,a+1)).\ee

\end{proof}

\begin{thm} \label{cyclicmain}

(i) The vertex operator algebra $\H(3)^{\mathbb{Z}_3}$ is {simple} of type $(1,2,3^3,4,5^3)$, i.e. it is strongly generated 
by seven vectors whose conformal weights are: $1,2,3,3,3,4,5,5,5$. This generating set is minimal.
 
(ii) $\H(3)^{\mathbb{Z}_3}$ is isomorphic to $\H(1) \otimes W$, where $W$ is of type $(2,3^3,4,5^3)$.

(iii) $\H(3)^{\mathbb{Z}_3}$ is not freely generated (by any set of generators).

\end{thm}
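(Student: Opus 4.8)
The plan is to run the proof of Theorem~\ref{main} almost verbatim, using the three cyclic reduction lemmas just established in place of Lemmas~\ref{orbifoldgenerators1}, \ref{quad-lem}, and \ref{cubic-lem}. First I would record that simplicity is immediate from \cite{DM}, since $\H(3)^{\ZZ_3}$ is the fixed-point subalgebra of the simple vertex operator algebra $\H(3)$ under a finite group. Next I would assemble the strong generating set surviving the three lemmas: the weight-one vector $\o^0_1(0)$, the quadratic vectors $\o^0_{2,3}(0,0),\o^0_{2,3}(0,1),\o^0_{2,3}(0,2),\o^0_{2,3}(0,3)$ of weights $2,3,4,5$, and the cubic vectors $\o^0_{2,2,2}(0,0,0),\o^0_{3,3,3}(0,0,0)$ of weight $3$ together with $\o^0_{2,2,2}(0,0,2),\o^0_{3,3,3}(0,0,2)$ of weight $5$. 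This is a set of nine vectors of type $(1,2,3^3,4,5^3)$, which establishes the strong-generation part of (i).

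For minimality I would bring in the graded character. A Molien-series computation gives
$$q^{1/8}\,{\rm ch}[\H(3)^{\ZZ_3}](\tau)=\tfrac{1}{3}\Bigl(\tfrac{1}{(q;q)_\infty^{3}}+\tfrac{2}{(q^{3};q^{3})_\infty}\Bigr),$$
and expanding in $q$ one checks
$$q^{1/8}\,{\rm ch}[\H(3)^{\ZZ_3}](\tau)-\frac{1}{(q;q)_\infty (q^{2};q)_\infty (q^{3};q)_\infty^{3}(q^{4};q)_\infty (q^{5};q)_\infty^{3}}=O(q^{6}).$$
Then minimality follows by the dimension count of Theorem~\ref{main}: if a proper subset strongly generated $\H(3)^{\ZZ_3}$, then for an omitted generator $g$ of weight $w\le 5$ the graded dimension $\dim\H(3)^{\ZZ_3}_{w}$ would be bounded above by the $q^{w}$-coefficient of the product above with the factor $(q^{w};q)_\infty^{-1}$ removed once, which is strictly smaller than the $q^{w}$-coefficient of the full product; but the displayed identity shows the full product equals $\dim\H(3)^{\ZZ_3}_{w}$ for all $w\le 5$, a contradiction.

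For (iii) I would exhibit a relation at conformal weight $6$. The difference from the $S_3$ case is that $\CC[x_1,x_2,x_3]^{\ZZ_3}$ is not a polynomial ring: it carries the syzygy $(y_2y_3)^3=y_2^{3}y_3^{3}$. Reading this through the filtration of Lemma~\ref{reconstruction}, the element
$$\o^0_{2,3}(0,0)_{-1}\o^0_{2,3}(0,0)_{-1}\o^0_{2,3}(0,0)-\o^0_{2,2,2}(0,0,0)_{-1}\o^0_{3,3,3}(0,0,0)$$
has vanishing symbol, hence drops to lower filtration and can be rewritten through the generators with strictly fewer factors; this is a nontrivial decoupling relation at weight $6$. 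Equivalently, inserting the factor $1-q^{6}$ in the numerator of the product above improves the agreement in the previous display to $O(q^{7})$. Since a free generating set of $\H(3)^{\ZZ_3}$ would necessarily have the same weight multiset $(1,2,3^3,4,5^3)$ as the minimal strong generating set, it would force ${\rm ch}[\H(3)^{\ZZ_3}]$ to equal that product exactly, contradicting the relation at weight $6$; as in Theorem~\ref{main}, this is also incompatible with the modularity of ${\rm ch}[\H(3)^{\ZZ_3}]$. Finally, for (ii), since $\ZZ_3$ fixes the $\b_1$ tensor factor we have $\H(3)=\langle\b_1(-1)\1\rangle\otimes\langle\b_2(-1)\1,\b_3(-1)\1\rangle$ and therefore $\H(3)^{\ZZ_3}\cong\H(1)\otimes W$ with $W:=\langle\b_2(-1)\1,\b_3(-1)\1\rangle^{\ZZ_3}={\rm Comm}(\H(1),\H(3)^{\ZZ_3})$; the eight non-linear generators above lie in $W$ and form a strong generating set of it, minimal by (i), so $W$ is of type $(2,3^3,4,5^3)$.

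The genuinely new work, once the three reduction lemmas are granted, is just expanding the character far enough to certify the two displayed identities, which is routine. The one point that needs care — and where the cyclic case is actually easier than the $S_3$ case — is that every generator has weight strictly below the weight $6$ of the first relation, so the dimension count forces minimality with no further argument; in Theorem~\ref{main} the analogous slack is that the two weight-$6$ generators still sit below the first relation, which there occurs at degree $9$.
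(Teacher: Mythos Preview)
Your proposal is correct and follows essentially the same approach as the paper, whose proof is literally the single sentence ``This proof is similar to the proof of Theorem~\ref{main}'' together with a reference to the character formula~(\ref{z3-char}); your expansion of that sentence, including the explicit check that the character agrees with the free product through $q^{5}$ and differs by one at $q^{6}$, is exactly what that sentence is meant to encode.

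Two small remarks. For (ii) you argue via the explicit tensor factorization $\H(3)=\langle\b_1\rangle\otimes\langle\b_2,\b_3\rangle$ and the fact that $\ZZ_3$ fixes $\b_1$, which is slightly more direct than the commutant argument the paper uses in the proof of Theorem~\ref{main}; both are valid and yield the same $W$. For (iii) your phrasing ``a free generating set would necessarily have the same weight multiset $(1,2,3^3,4,5^3)$'' is a hair imprecise: what the character forces is that any free generating set must have exactly this multiset in weights $\le 5$ and possibly \emph{more} generators at weight $\ge 6$, whence the free character at $q^6$ would be at least $76>75$; the contradiction still stands, but you should say it that way.
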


\begin{proof}
This proof is similar to the proof of Theorem \ref{main}. In particular, we use the fact that the free $\mathcal{W}$-algebra on a generating set containing the same weight elements with any one removed has certain graded dimensions strictly less than those described by the character of $\H(3)^{\mathbb{Z}_3}$, (\ref{z3-char}).
\end{proof}

Finally, the orbifold $\H(3)^{\mathbb{Z}_3}$ is described in terms of primary generators with the following vectors.

\be\begin{aligned}
h&=\o_{1}^0(0), \ \ \o=\o_2^0(0,0)+\frac{1}{2}\o_1^0(0)_{-1}\o_1^0(0), \ \ \ \ \ J_1=\o_2^0(0,1)-\frac{1}{2}\o_{-2}\1+\frac{1}{2}h_{-2}h,\\
J_2&=2\o_2^0(0,2)-\o_2^0(0,1)_{-2}\1+\frac{2}{3}\o_{-1}\o-\frac{22}{3}h_{-1}h_{-1}\o+\frac{77}{9}h_{-3}h\\&-\frac{83}{12}h_{-2}h_{-2}\1,\\
J_3&=6\o_2^0(0,3)-\frac{8}{3}\o_2^0(0,2)_{-2}\1+4h_{-2}h_{-1}\o+8h_{-1}h_{-1}\o_{-2}\1\\&-16h_{-3}\o+8h_{-1}\o_{-3}\1+4h_{-1}^4h-\frac{34}{3}h_{-2}h_{-2}h-\frac{40}{3}h_{-3}h_{-1}h+4h_{-1}^4h\\&+\frac{58}{3}h_{-3}h_{-2}\1-23h_{-4}h+\frac{88}{5}h_{-5}\1,\\
C_1^{(2)}&=\o^0_{2,2,2}(0,0,0), \ \ C_2^{(2)}=\o^0_{2,2,2}(0,0,2)-\frac{1}{15}\o_{2,2,2}^0(0,0,0)_{-3}\1-\frac{8}{15}\o_{-1}\o_{2,2,2}^0(0,0,0),\\
C_1^{(3)}&=\o^0_{3,3,3}(0,0,0), \ \ C_2^{(3)}=\o^0_{3,3,3}(0,0,2)-\frac{1}{15}\o_{3,3,3}^0(0,0,0)_{-3}\1-\frac{8}{15}\o_{-1}\o_{3,3,3}^0(0,0,0).
\end{aligned}\ee

\section{The character of $\H(n)^{S_n}$}

We first derive a general formula for the character of the $S_n$-orbifold of a vertex algebra. Although formulas of this form have appeared in the literature (e.g., \cite{B} or \cite{KT}), we include the proof for 
completeness.

\begin{thm} Let $V$ be a VOA and $f^{S_n}(q)$ the character of the $S_n$-fixed point subalgebra $V(n)^{S_n} \subset V^{\otimes^n}$ (under the usual action). Then 
$$f^{S_n}(q)=\frac{1}{n!} \sum_{g \in S_n} {\rm tr}_{V^{\otimes n}} g q^{L(0)-c/24}$$
where $g$ acts by permutation of tensor factors.
\end{thm}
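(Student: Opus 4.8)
The plan is to compute the graded trace of the projection operator onto $S_n$-invariants. The starting point is the standard fact that the Reynolds operator $P = \frac{1}{n!}\sum_{g \in S_n} g$ is an idempotent on $V^{\otimes n}$ whose image is exactly the fixed-point subspace $V(n)^{S_n}$. Since each $g \in S_n$ acts by permuting tensor factors, it commutes with the total Virasoro action $L(0)$ on $V^{\otimes n}$ (the grading is the sum of the gradings on the factors, which is permutation-symmetric), so $P$ preserves each graded piece $V(n)_{[m]}$ and restricts to the projection onto $(V(n)_{[m]})^{S_n} = (V(n)^{S_n})_{[m]}$.

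First I would record that for any idempotent $P$ acting on a finite-dimensional space $W$, $\dim(PW) = \mathrm{tr}_W P$. Applying this to $W = V(n)_{[m]}$ for each conformal weight $m$ (each such piece is finite-dimensional since $V$ is a VOA, hence has finite-dimensional graded pieces and lowest weight bounded below), we get
\[
\dim\big((V(n)^{S_n})_{[m]}\big) = \mathrm{tr}_{V(n)_{[m]}} P = \frac{1}{n!}\sum_{g \in S_n} \mathrm{tr}_{V(n)_{[m]}} g.
\]
Then I would multiply by $q^{m - c/24}$ (with $c$ the central charge of $V^{\otimes n}$, i.e. $n$ times that of $V$) and sum over $m$. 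On the left this assembles into $f^{S_n}(q)$ by definition of the character; on the right, interchanging the (locally finite) sum over $m$ with the finite sum over $g$, and using that $g$ preserves the grading so that $\sum_m q^{m-c/24}\,\mathrm{tr}_{V(n)_{[m]}} g = \mathrm{tr}_{V^{\otimes n}} g\, q^{L(0)-c/24}$, yields the claimed identity.

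There is essentially no serious obstacle here — the only points requiring a word of care are the convergence/interchange of sums (handled because the sum over $g$ is finite and each graded piece is finite-dimensional, so everything is a formal power series identity in $q$ coefficient-by-coefficient) and the observation that $g$ is grading-preserving so that the graded trace factors as $\mathrm{tr}_{V^{\otimes n}} g\, q^{L(0)-c/24}$. I would state these explicitly but not belabor them. The mild subtlety worth flagging is simply that $\mathrm{tr}_{V^{\otimes n}} g\, q^{L(0)-c/24}$ should be read as the formal sum $\sum_m q^{m-c/24}\,\mathrm{tr}_{V(n)_{[m]}} g$, which makes sense precisely because each $V(n)_{[m]}$ is finite-dimensional and $g$-stable.
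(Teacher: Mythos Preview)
Your proposal is correct and follows essentially the same approach as the paper: both arguments use the idempotent (Reynolds operator) $P=\frac{1}{n!}\sum_{g\in S_n} g$ projecting onto $V(n)^{S_n}$ and compute the character as its graded trace. Your version is slightly more explicit about why $g$ is grading-preserving and about working coefficient-by-coefficient, but the core idea is identical.
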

\begin{proof} Consider the group algebra $\mathbb{C}[S_n]$ and the idempotent 
$$e=\frac{1}{|S_n|} \sum_{g \in S_n} g \in \mathbb{C}[S_n], \ \ \ e^2=e$$
where $|S_n|=n!$. The space ${\rm Im}(e)$ is precisely $V(n)^{S_n}$. Since the eigenvaues of $e$ are $1$ and $0$ (and $1$ for the $S_n$-fixed 
subalgebra), the character 
can be computed simply by taking the trace of $e$:
$${\rm tr}_{V^{\otimes^n}} e q^{L(0)-c/24}={\rm tr}_{V(n)^{S_n}} q^{L(0)-c/24}.$$
\end{proof}

Now we specialize to $n=3$ and $V=\H(1)$.

\begin{prop} \label{char-S3}
$${\rm ch}[\H(3)^{S_3}](\tau)=\frac{q^{-1/8}}{6}\left(\prod_{n=1}^\infty \frac{1}{(1-q^n)^3}+ 3 \prod_{n=1}^\infty \frac{1}{(1-q^{2n})} \prod_{n=1}^\infty \frac{1}{(1-q^n)}
+2 \prod_{n=1}^\infty \frac{1}{(1-q^{3n})}\right).$$
\end{prop}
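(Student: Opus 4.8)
The plan is to specialize the general trace formula from the preceding Theorem to $n=3$ and $V=\H(1)$, so that
$$
{\rm ch}[\H(3)^{S_3}](\tau)=\frac{1}{6}\sum_{g\in S_3}{\rm tr}_{\H(1)^{\otimes 3}}\,g\,q^{L(0)-c/24},
$$
and then evaluate the three distinct conjugacy-class contributions. Since $\H(1)$ has central charge $c=1$, the prefactor is $q^{-1/8}$. I would group the six elements of $S_3$ as: the identity (one element), the three transpositions, and the two $3$-cycles, and note that the trace ${\rm tr}_{\H(1)^{\otimes 3}}\,g\,q^{L(0)}$ depends only on the cycle type of $g$.

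The key computation is the graded trace of a permutation $g$ acting on a tensor power of a graded space. I would use the standard fact that if $g$ has cycle type with cycles of lengths $\ell_1,\dots,\ell_r$, then $g$ acting on $\H(1)^{\otimes 3}$ decomposes the underlying Fock space according to the orbit structure on the $L(0)$-graded pieces, giving
$$
{\rm tr}_{\H(1)^{\otimes 3}}\,g\,q^{L(0)}=\prod_{j=1}^{r}\;\prod_{m\geq 1}\frac{1}{1-q^{\ell_j m}},
$$
because each $\ell_j$-cycle contributes, for each Heisenberg mode index $m$, a single invariant direction weighted by $q^{\ell_j m}$. Concretely: the identity ($r=3$, all $\ell_j=1$) contributes $\prod_{m\geq 1}(1-q^m)^{-3}$; a transposition ($\ell$'s equal to $1$ and $2$) contributes $\prod_{m\geq 1}(1-q^m)^{-1}(1-q^{2m})^{-1}$; and a $3$-cycle ($\ell=3$) contributes $\prod_{m\geq 1}(1-q^{3m})^{-1}$. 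Summing with multiplicities $1,3,2$ and dividing by $6$ yields exactly the claimed formula.

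I expect the main obstacle to be justifying the trace formula ${\rm tr}\,g\,q^{L(0)}=\prod_j\prod_m (1-q^{\ell_j m})^{-1}$ cleanly. The cleanest route is to observe that $\H(1)^{\otimes 3}$ is, as a graded vector space with its $L(0)$-grading, the symmetric algebra on $\bigoplus_{m\geq 1}\CC^3$ with $\CC^3$ sitting in degree $m$ (corresponding to the modes $\alpha_i(-m)$), that $S_3$ acts by permuting the $\CC^3$ factor, and that for a graded vector space $U$ one has ${\rm tr}_{S(U)}\,g = \exp\!\big(\sum_{k\geq 1}\frac{1}{k}{\rm tr}_U(g^k)\big)$ (the graded version of $\det(1-gU)^{-1}$ over each degree). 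Evaluating ${\rm tr}_{\CC^3}(g^k)$ (which is $3,1,0$ according to whether $k$ is a multiple of the cycle length pattern) and resumming the logarithm degree-by-degree produces the product over cycles. Once this identity is in hand the proof is a short bookkeeping exercise; I would present the trace identity, tabulate the three cases, and combine.
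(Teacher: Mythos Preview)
Your proposal is correct and follows essentially the same approach as the paper: apply the preceding theorem, reduce to the three conjugacy classes of $S_3$ with multiplicities $1,3,2$, and compute each graded trace. The only minor difference is that you justify the individual traces via the symmetric-algebra identity ${\rm tr}_{S(U)}\,g=\det(1-g\,|\,U)^{-1}$, whereas the paper argues directly by picking the monomial basis $p_{\lambda_1}\otimes p_{\lambda_2}\otimes p_{\lambda_3}$ and observing which basis vectors are fixed by $(12)$ and $(123)$; these are equivalent computations.
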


\begin{proof} It suffices to consider elements: $g=1$, $g=(12)$ and $g=(123)$. By the previous theorem we have 
$${\rm ch}[\H(3)^{S_3}](\tau)=\frac{1}{6} \left( {\rm tr}_{\H(3)} q^{L(0)-3/24}+{3} \cdot {\rm tr}_{\H(3)} (12) q^{L(0)-3/24}+{2} \cdot {\rm tr}_{\H(3)} (123) q^{L(0)-3/24}\right)$$
Clearly, 
$${\rm tr}_{\H(3)} q^{L(0)-3/24}=q^{-1/8} \prod_{n=1}^\infty \frac{1}{(1-q^n)^3}.$$
For ${\rm tr} (12)$ 
we compute the trace on the following basis of $\H(3)$ 
$$\mathcal{B}=\{ p_{\lambda_1} \otimes p_{\lambda_2} \otimes p_{\lambda_3}, \lambda_i \in \mathcal{P} \}$$
where $\mathcal{P}$ is the set of partitions, and for $p_{\lambda_i}$ we take obvious monomials of Heisenberg elements.
The matrix representation of $(12) \in {\rm End}(\H(3))$ in this basis has a non-zero entry on the diagonal (=1) if and only if the corresponding basis element is 
$$p_{\lambda} \otimes p_{\lambda} \otimes p_{\nu}.$$
Since all $p_{\lambda}$ are generated by $\alpha(-i)$, vectors contributing to the trace are generated by $\alpha(-i) \otimes \alpha(-i) \otimes \alpha(-j)$. 
Thus 
$${\rm tr}_{\H(3)} (12) q^{L(0)-3/24}=q^{-1/8} \prod_{i=1}^\infty \frac{1}{(1-q^{2i})} \prod_{j=1}^\infty \frac{1}{(1-q^j)}.$$
One similarly argues for $3$-cycles, so we get 
$${\rm tr}_{\H(3)} (123) q^{L(0)-3/24}=q^{-1/8} \prod_{i=1}^\infty \frac{1}{(1-q^{3i})}.$$
\end{proof}


\begin{cor}\label{z3-char}
$${\rm ch}[\H(3)^{\mathbb{Z}_3}](\tau)=\frac{q^{-1/8}}{3}\left(\prod_{n=1}^\infty \frac{1}{(1-q^n)^3}
+2 \prod_{n=1}^\infty \frac{1}{(1-q^{3n})}\right).$$
\end{cor}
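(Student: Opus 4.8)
The plan is to derive this directly from the group-averaging character formula proved at the start of this section, specialized to the cyclic subgroup $\mathbb{Z}_3 = \langle (123) \rangle \subset S_3$. The proof of that formula uses only that $e = \frac{1}{|G|}\sum_{g \in G} g$ is an idempotent of $\mathbb{C}[G]$ whose image is the fixed-point subalgebra and whose eigenvalues are $0$ and $1$; none of this requires $G$ to be the full symmetric group, so the identical argument applies to any finite subgroup $G \subset {\rm Aut}(V^{\otimes n})$ and gives, for $V = \H(1)$ and $n=3$,
$$
{\rm ch}[\H(3)^{\mathbb{Z}_3}](\tau) = \frac{1}{3}\sum_{g \in \mathbb{Z}_3} {\rm tr}_{\H(3)}\, g\, q^{L(0)-3/24}.
$$

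Next I would evaluate the three traces on the right-hand side. For $g = 1$ this is ${\rm tr}_{\H(3)}\, q^{L(0)-3/24} = q^{-1/8}\prod_{n \geq 1}(1-q^n)^{-3}$, exactly as in the proof of Proposition~\ref{char-S3}. The remaining elements $(123)$ and $(132)$ are both $3$-cycles, so the computation carried out there applies verbatim: in the monomial basis $\{p_{\lambda_1} \otimes p_{\lambda_2} \otimes p_{\lambda_3}\}$ of $\H(3)$, a basis vector lies on the diagonal of the matrix of a $3$-cycle only when $\lambda_1 = \lambda_2 = \lambda_3$, so that ${\rm tr}_{\H(3)}\, (123)\, q^{L(0)-3/24} = q^{-1/8}\prod_{n \geq 1}(1-q^{3n})^{-1}$, and likewise for $(132)$. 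Summing the three contributions and dividing by $3$ yields precisely the asserted identity, the coefficient $2$ accounting for the two $3$-cycles.

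There is essentially no obstacle here: the only point worth making explicit is that the averaging theorem of this section is valid for an arbitrary finite subgroup, not merely for $S_n$, which is immediate from its proof. Everything else is a direct citation of the trace computations already done in Proposition~\ref{char-S3}, now restricted to the three elements of $\mathbb{Z}_3$.
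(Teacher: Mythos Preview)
Your proposal is correct and matches the paper's intended argument: the corollary is stated without proof, immediately following Proposition~\ref{char-S3}, precisely because it follows by restricting the group-averaging formula to $\mathbb{Z}_3$ and reusing the trace computations for the identity and the $3$-cycles already carried out there. Your remark that the averaging theorem applies to any finite subgroup (not just $S_n$) is exactly the small observation needed, and the paper evidently takes it for granted.
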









\section{ Modular invariance of $\H(3)^{S_3}$-characters and quantum dimensions}


By a result from \cite{DM} (especially Section 7 devoted to dihedral groups), we have a decomposition of $\H(3)$ in terms of $\H(3)^{S_3}$-modules:
\begin{equation} \label{decomp}
\H(3)=\H(3)^{S_3} \oplus  \H(3)^{S_3,sgn} \oplus V(2) \otimes \H(3)^{S_3,st}
\end{equation}
where $V(2)$ is the $2$-dimensional standard representation of $S_3$, and $\H(3)^{S_3,sgn}$ and $\H(3)^{S_3,st}$ are irreducible 
$\H(3)^{S_3}$-modules. 

\begin{lem} We have
\begin{align*}
{\rm ch}[\H(3)^{S_3,sgn}](\tau)&=\frac{q^{-1/8}}{6}\left(\prod_{n=1}^\infty \frac{1}{(1-q^n)^3}- 3 \prod_{n=1}^\infty \frac{1}{(1-q^{2n})} \prod_{n=1}^\infty \frac{1}{(1-q^n)}
+2 \prod_{n=1}^\infty \frac{1}{(1-q^{3n})}\right) \\
{\rm ch}[\H(3)^{S_3,st}](\tau)&=\frac{q^{-1/8}}{6}\left(2 \prod_{n=1}^\infty \frac{1}{(1-q^n)^3}- 2 \prod_{n=1}^\infty \frac{1}{(1-q^{3n})}\right) .
\end{align*}
\end{lem}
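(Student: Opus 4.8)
The plan is to read both characters off the $S_3$-isotypic decomposition \eqref{decomp} of $\H(3)$, together with the three traces already computed in the proof of Proposition \ref{char-S3}. The starting point is that the conformal vector of $\H(3)$ is $S_3$-invariant, so $L(0)$ commutes with the $S_3$-action and preserves each summand of \eqref{decomp}; moreover, in the summand $\rho\otimes M_\rho$ attached to an irreducible representation $\rho$ of $S_3$ the group acts through the factor $\rho$ only, while $q^{L(0)-c/24}$ acts on $M_\rho$ only. Writing $M_{\rm triv}=\H(3)^{S_3}$, $M_{\rm sgn}=\H(3)^{S_3,sgn}$ and $M_{\rm st}=\H(3)^{S_3,st}$ for the three multiplicity spaces, this gives, for every $g\in S_3$,
\[
{\rm tr}_{\H(3)}\,g\,q^{L(0)-c/24}=\chi_{\rm triv}(g)\,{\rm ch}[\H(3)^{S_3}](\tau)+\chi_{\rm sgn}(g)\,{\rm ch}[\H(3)^{S_3,sgn}](\tau)+\chi_{\rm st}(g)\,{\rm ch}[\H(3)^{S_3,st}](\tau).
\]

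Next I would invert this identity. Since the traces of $e$, $(12)$ and $(123)$ on $\H(3)\,q^{L(0)-c/24}$ were evaluated in the proof of Proposition \ref{char-S3} as $q^{-1/8}\prod_{n\geq1}(1-q^n)^{-3}$, $q^{-1/8}\prod_{n\geq1}(1-q^{2n})^{-1}\prod_{n\geq1}(1-q^n)^{-1}$ and $q^{-1/8}\prod_{n\geq1}(1-q^{3n})^{-1}$ respectively, and since ${\rm ch}[\H(3)^{S_3}]$ is given by Proposition \ref{char-S3}, evaluating the displayed identity at $g=e$ and $g=(12)$ (with $\chi_{\rm st}(e)=2$, $\chi_{\rm st}((12))=0$, $\chi_{\rm sgn}((12))=-1$) yields a $2\times2$ linear system whose solution is exactly the asserted pair of formulas. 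Equivalently, one may apply the character-orthogonality projection ${\rm ch}[M_\rho]=\tfrac16\sum_{g\in S_3}\chi_\rho(g)\,{\rm tr}_{\H(3)}\,g\,q^{L(0)-c/24}$ — legitimate because the characters of $S_3$ are real-valued and its conjugacy classes are inverse-closed — and split the sum over the classes $\{e\}$, the three transpositions, and the two $3$-cycles; the character values $(1,-1,1)$ on these classes give ${\rm ch}[\H(3)^{S_3,sgn}]$ and the values $(2,0,-1)$ give ${\rm ch}[\H(3)^{S_3,st}]$.

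There is no genuine obstacle here beyond this bookkeeping; the only step that merits a remark is the claim that $S_3$ acts on the summand $V(2)\otimes\H(3)^{S_3,st}$ of \eqref{decomp} through $V(2)$ alone, so that its contribution to ${\rm tr}\,g$ is $\chi_{\rm st}(g)\,{\rm ch}[\H(3)^{S_3,st}]$ and in particular vanishes on transpositions — but this is precisely the shape of the decomposition quoted from \cite{DM}. As a consistency check one verifies the relation ${\rm ch}[\H(3)]={\rm ch}[\H(3)^{S_3}]+{\rm ch}[\H(3)^{S_3,sgn}]+2\,{\rm ch}[\H(3)^{S_3,st}]$ obtained by taking graded dimensions in \eqref{decomp}, and, if desired, compares the leading $q$-coefficients on both sides.
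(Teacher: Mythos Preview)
Your proof is correct and follows essentially the same route as the paper: the paper computes ${\rm ch}[\H(3)^{S_3,sgn}]$ by the same projector argument as Proposition~\ref{char-S3} with the sign character inserted (so the transposition term flips sign), and then obtains ${\rm ch}[\H(3)^{S_3,st}]$ by subtraction from the decomposition~\eqref{decomp}. Your character-orthogonality formulation is just a slightly more systematic packaging of the same computation.
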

\begin{proof} 
The character of the sign module \cite{DM}
$$\H(3)^{S_3,sgn}=\{ a \in \H(3) : \sigma(a)={\rm sgn}(\sigma) a, \sigma \in S_3 \}$$
is computed along the lines of Proposition \ref{char-S3} with the difference that in the sign representation $2$-cycles act as $-1$. This sign contributes with a negative sign to the middle terms of the formula. 

The second relation follows  directly from the first formula, Proposition \ref{char-S3},  and decomposition (\ref{decomp}).
\end{proof}

Next we construct several $\H(3)^{S_3}$-modules coming from ordinary and $g$-twisted $\H(3)$-modules, $g \in S_3$. 

We first look at $\H(3)^{S_3}$-modules coming from irreducible $\H(3)$-modules (Fock representations).
Let ${\bf w}=(w_1,w_2,w_3) \in \mathbb{C}^3$. We denote by $F_{w_1,w_2,w_3}$ the irreducible $\H(3)$-module 
with highest weight ${\bf w}$ such that $\alpha_i(0)$ acts as multiplication by $w_i$. Similarly, $F_{w}$ denotes the Fock representation for $\H(1)$ with highest weight $w$.

Interestingly, $F_{w_1,w_2,w_3}$ is generically irreducible as an $\H(3)^{S_3}$-module.
\begin{lem} \label{typical}
Let $w_1,w_2,w_3 \in \mathbb{C}^3$ such that $w_i \neq w_j$, $i \neq j$. Then the  $\H(3)$-module $F_{w_1,w_2,w_3}$  
is irreducible as an $\H(3)^{S_3}$-module.

\end{lem}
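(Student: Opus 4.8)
The plan is to show that already the Heisenberg zero modes $\alpha_i(0)$, which act as scalars $w_i$ on $F_{w_1,w_2,w_3}$, can be recovered from the action of $\H(3)^{S_3}$, and that once we know how to separate the three ``colors'' $i=1,2,3$ we can reconstruct the full Fock-space action of $\H(3)$ from that of the orbifold. First I would pass to the generators $\o_2^0(a,b)$ and $\o_3^0(a,b,c)$ of $\H(3)^{S_3}$ from \eqref{initialgenerators} (or rather \eqref{naturalgenerators1}); on $F_{w_1,w_2,w_3}$ the zero mode of $\o_2^0(0,0)=\sum_i \alpha_i(-1)^2\1$ acts on the highest-weight vector as $\sum_i w_i^2$, the zero mode of $\o_3^0(0,0,0)=\sum_i\alpha_i(-1)^3\1$ acts as $\sum_i w_i^3$, and the zero mode of $\omega_1(0)=\sum_i\alpha_i(-1)\1$ acts as $\sum_i w_i$. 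Thus the three power sums $p_1,p_2,p_3$ of $(w_1,w_2,w_3)$ are read off from the $\H(3)^{S_3}$-action, hence so is the multiset $\{w_1,w_2,w_3\}$; by the hypothesis $w_i\neq w_j$ these are three distinct scalars.

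Next I would use this to build, inside (a completion of) the image of $\H(3)^{S_3}$ acting on $F_{w_1,w_2,w_3}$, ``projection-like'' combinations that isolate a single color. Concretely, for each negative mode index $m$ the operators $\sum_i w_i^k\,\alpha_i(m)$ for $k=0,1,2$ arise from suitable modes of $\omega_1,\o_2^0,\o_3^0$ (e.g. the appropriate mode of $\o_2^0(0,m)=\sum_i\alpha_i(-1)\alpha_i(-1-m)\1$ contributes $\sum_i w_i\alpha_i(-1-m)$ up to lower terms, and similarly $\o_3^0$ contributes $\sum_i w_i^2\alpha_i(-1-m)$). Since the Vandermonde matrix $(w_i^k)_{0\le k\le 2,\,1\le i\le 3}$ is invertible precisely when the $w_i$ are distinct, we can solve for each individual operator $\alpha_i(m)$, $m<0$, as an element of the associative algebra generated by the modes of $\H(3)^{S_3}$. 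This shows that the cyclic $\H(3)^{S_3}$-submodule of $F_{w_1,w_2,w_3}$ generated by the highest-weight vector is all of $F_{w_1,w_2,w_3}$.

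Finally I would promote ``cyclic'' to ``irreducible.'' Any nonzero $\H(3)^{S_3}$-submodule $N\subseteq F_{w_1,w_2,w_3}$ is graded (being a submodule of a module with finite-dimensional graded pieces and with $L(0)$ acting semisimply), so it contains a lowest-weight vector $v$. Running the same Vandermonde argument starting from $v$ — the zero modes of $\omega_1,\o_2^0,\o_3^0$ still act on $v$ by the scalars $p_1,p_2,p_3$ determined by $\{w_i\}$, and the negative modes of the orbifold generators, applied to $v$, still recover each $\alpha_i(m)v$ — shows that $N$ contains the whole Fock space, i.e. $N=F_{w_1,w_2,w_3}$. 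Hence $F_{w_1,w_2,w_3}$ is irreducible over $\H(3)^{S_3}$.

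The main obstacle I anticipate is the bookkeeping in the middle step: writing the individual $\alpha_i(m)$ as explicit combinations of modes of the orbifold generators requires tracking the ``lower order'' correction terms (products of several $\alpha_j$'s and lower-mode contributions) that accompany the leading $\sum_i w_i^k\alpha_i(-1-m)$ terms, and arguing inductively on conformal weight that these corrections already lie in the algebra generated by lower-weight orbifold modes. This is the kind of triangular/inductive argument that is conceptually routine once set up correctly, but needs care to state cleanly — in particular one should work with the modes acting on a fixed graded component and induct on degree so that the Vandermonde inversion is applied only after the correction terms have been absorbed.
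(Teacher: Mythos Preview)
Your approach differs substantially from the paper's. The paper invokes Theorem~6.1 of Dong--Mason \cite{DM}: if $V$ is a simple VOA, $g\in\mathrm{Aut}(V)$ has prime order, and $M$ is a simple $V$-module with $g\circ M\not\cong M$, then $M$ remains simple over $V^{\langle g\rangle}$. Since the $w_i$ are pairwise distinct, $\sigma\circ F_{w_1,w_2,w_3}\cong F_{w_{\sigma(1)},w_{\sigma(2)},w_{\sigma(3)}}\not\cong F_{w_1,w_2,w_3}$ for every $\sigma\neq e$, and two applications of this criterion (once for a $2$-cycle, once for a $3$-cycle, via the chain $\H(3)^{S_3}\subset\H(3)^{\langle(12)\rangle}\subset\H(3)$) give the lemma with essentially no computation. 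Your Vandermonde idea --- reconstruct each individual $\alpha_i(m)$ from orbifold modes --- is more elementary in that it avoids the general orbifold machinery, but it trades a one-line citation for genuine work.

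The main gap is exactly the one you flag, and it is less routine than you suggest. Writing out the relevant mode,
\[
(\omega_2(0,0))_{m+1}=2\sum_i w_i\,\alpha_i(m)+\sum_i\sum_{\substack{a+b=m\\a,b\neq0}}{:}\alpha_i(a)\alpha_i(b){:},
\]
the correction involves $\alpha_i(a)$ for \emph{all} $a\neq 0$, so an induction on $|m|$ does not close; the analogous tail for $\omega_3$ is worse. One can rescue the argument by inducting instead on conformal weight (show $F_n\subset A\cdot v_0$ by applying degree-raising orbifold modes to $F_0,\dots,F_{n-1}$ and checking that on those vectors the corrections land in pieces already controlled), but this must actually be carried out, not asserted. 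Two smaller issues: the zero modes of $\omega_2,\omega_3$ do \emph{not} act on an arbitrary $v$ by the power-sum scalars (e.g.\ $L(0)v=(\tfrac12\sum_i w_i^2+\deg v)\,v$), so that sentence in your last paragraph is wrong as stated; and recovering only the creation operators $\alpha_i(m)$, $m<0$, gives cyclicity from $v_0$ but not irreducibility --- you must run the same Vandermonde argument for the annihilation operators $\alpha_i(m)$, $m>0$, to conclude that any lowest-weight vector of a nonzero submodule already lies in $F_0$.
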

\begin{proof} Let $V$ be a simple VOA, $g$ be an automorphism of $V$ of prime order $p$, and $M$ be a simple
$V$-module such that $ g \circ M$  is not isomorphic to $M$ as $V$-modules. Then according to Theorem 6.1 \cite{DM}, $M$ is a simple $V^{<g>}$-module.
To apply this result we first observe embeddings 
$$\H(3)^{S_3} \hookrightarrow \H(3)^{<(12)>}  \hookrightarrow \H(3).$$
Because of $w_1 \neq w_2$,  $(12) \circ F_{w_1,w_2,w_3} \ncong F_{w_1,w_2,w_3}$ as $\H(3)$-modules. Indeed, for $(12) \circ F_{w_1,w_2,w_3}$ 
actions of $\alpha_1(0)$ and $\alpha_2(0)$ are switched, so $(12) \circ F_{w_1,w_2,w_3} \cong F_{w_2,w_1,w_3}$.
Therefore  $F_{w_1,w_2,w_3}$ is irreducible as a $G(3):=\H(3)^{(<12>)}$-module.
Similarly, we consider $(123) \circ F_{w_1,w_2,w_3}$ and $F_{w_1,w_2,w_3}$ as $G(3)$-modules.
Because of $w_1 \neq w_3$ and $w_3 \neq w_2$, they are not isomorphic as $G(3)$-modules and thus $F_{w_1,w_2,w_3}$ is irreducible as a $G(3)^{<(123)>}$-module. The proof follows.
\end{proof}

In the study of characters we require modular variables so we let $q=e^{2 \pi i \tau}$, where $\tau \in \mathbb{H}$ (the upper half-plane).
Let 
$$\eta(\tau)=q^{1/24} (q;q)_\infty,$$
the Dedekind $\eta$-function.
Clearly, for every ${\bf w}$,
\begin{equation} \label{generic}
{\rm ch}[F_{w_1,w_2,w_3}](\tau)=\frac{q^{-3/24 + w_1^2/2+w_2^2/2+w_3^2/2}}{(q;q)_\infty}=\frac{q^{w_1^2/2+w_2^2/2+w_3^2/2}}{\eta(\tau)^3}.
\end{equation}

Next we consider $g$-twisted $\H(3)$-modules, so $g$ is either a $2-$ or $3$-cycle. We may assume, without loss of generality, that $g$ is either $(12)$ or $(123)$.

We first construct a family of $\theta$-twisted $\H(3)$-modules, where  $\theta:=(12)$ switches the first two "coordinates". By using the isomorphism in Section 3, $\H(2)^{<(12)>} \cong \H(1) \otimes M(1)^+$, for every $w_1,w_3 \in \mathbb{C}$, we get
a $\theta$-twisted $\H(3)$-module
$$F_{w_1,w_3}(\theta):=F_{w_1} \otimes M(1)(\theta) \otimes F_{w_3},$$
 where $M(1)(\theta) \cong \mathbb{C}[\alpha(-1/2),\alpha(-3/2),....]$ is a $\mathbb{Z}_2$-twisted $\H(1)$-module studied in \cite{DN}. 
This module is of conformal weight $\frac{1}{16}$ and its character is easily computed. We quickly  get 
$${\rm ch}[F_{w_1,w_3}(\theta)](\tau)=\frac{q^{1/16-3/24 +{w_1^2}/{2}+{w_3^2}/{2}}}{(q^{1/2};q)_\infty (q;q)_\infty (q;q)_\infty}=\frac{q^{-1/16 + {w_1^2}/{2}+{w_3^2}/{2}}}{(q^{1/2};q^{1/2})_\infty (q;q)_\infty}$$
\begin{equation} \label{12-char}
=\frac{q^{{w_1^2}/{2}+{w_3^2}/{2}}}{\eta(\tau/2) \eta(\tau)}.
\end{equation}

Further, we construct a family of $(123)$-twisted $\H(3)$-modules. Let $\sigma=(123)$.
We first construct a $\sigma$-twisted Heisenberg algebra $\hat{\frak{h}}^\sigma$ as in \cite{Le}. It is isomorphic to  
$\mathbb{C}[t,t^{-1}] \oplus t^{1/3}\mathbb{C}[t,t^{-1}] \oplus t^{2/3}\mathbb{C}[t,t^{-1}] \oplus \mathbb{C}k$ as a graded Lie algebra. Then for every $w \in \mathbb{C}$ we have an associated $\sigma$-twisted $\H(3)$-representation $F_w(\sigma) \cong U(\frak{h}^\sigma_{<0})$. Moreover,
$${{\rm ch}}[F_w(\sigma)](\tau)=\frac{q^{w^2/2+h-1/8}}{(q;q)_\infty (q^{1/3};q)_\infty (q^{2/3};q)_\infty}=\frac{q^{w^2/2+h-3/24}}{(q^{1/3};q^{1/3})_\infty},$$
where $h=\frac{1}{4p^2} \sum_{i=1}^{p-1} i(p-i)r_i$,
and $p$ is the order of $\sigma$ and $r_i$ ($1 \leq i \leq 3$) are dimensions of the eigenspaces of $\sigma$. Plugging in $p=3$ and $r_i=1$ into the last formula we obtain $h=\frac{1}{9}$ and thus
\begin{equation} \label{123}
{\rm ch}[F_w(\sigma)](\tau)=\frac{q^{w^2/2-1/72}}{(q^{1/3};q^{1/3})_\infty}=\frac{q^{w^2/2}}{\eta(\tau/3)}.
\end{equation}

Here we do not pursue decomposition and irreducibility of $F_{w_1,w_3}(\sigma)$ and $F_w(\sigma)$ as $\H(3)^{S_3}$-modules. This will be a subject of \cite{MP2}.

\subsection{Modular invariance}
In this part we obtain a modularity result for the character of $\H(3)^{S_3}$ under the $S$-transformation, $\tau \to -\frac{1}{\tau}$. This was briefly discussed in the introduction. 

We make use of a well-known  modular relation for the  Dedekind $\eta$-function:
\begin{equation} \label{dedekind}
\eta(-1/\tau)^n=(\sqrt{-  i \tau })^n \eta(\tau)^n, \ \ n \in \mathbb{N}.
\end{equation}

Let us also recall a higher dimensional Gauss' integral formula
$$\int_{\mathbb{R}^n} q^{w_1^2/2+\cdots + w_n^2/2} d {\bf w}=\frac{1}{(\sqrt{-i \tau})^n}.$$
From the Gauss formula and (\ref{dedekind}) we immediately get the following relations.
\begin{lem} \label{modularity}
\begin{align*}
\frac{1}{\eta(-1/\tau)^3}&=\int_{\mathbb{R}^3} \frac{q^{w_1^2/2+w_2^2/2+w_3^2/2}}{\eta(\tau)^3} d{\bf w}, \ {\bf w}=(w_1,w_2,w_3) \\
 \frac{1}{\eta(-1/\tau) \eta(-2/\tau)}& =\sqrt{2} \int_{\mathbb{R}^2}  \frac{q^{w_1^2/2+w_3^2/2}}{\eta(\tau)\eta(\tau/2)} d{\bf w} , \  \ \ \  {\bf w}=(w_1,w_3)\\
\frac{1}{\eta(-3/\tau)}&=\sqrt{3} \int_{\mathbb{R}} \frac{q^{w^2/2}}{\eta(\tau/3)} dw.
\end{align*}
\end{lem}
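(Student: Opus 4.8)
The plan is to derive each of the three identities in Lemma \ref{modularity} by combining the Dedekind $\eta$-transformation formula \eqref{dedekind} with the higher-dimensional Gaussian integral recalled just above the statement. The underlying principle is the same in all three cases: the character formulas \eqref{generic}, \eqref{12-char}, \eqref{123} express $\frac{1}{\eta(\cdot)}$ (to the appropriate power, with possibly rescaled argument) times a Gaussian in the Fock/highest-weight parameters, and integrating out those parameters collapses the Gaussian to a power of $(\sqrt{-i\tau})^{-1}$, which is exactly the factor produced by the $S$-transformation of $\eta$.

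First I would treat the rank-three case. Apply \eqref{dedekind} with $n=3$ to write $\eta(-1/\tau)^3 = (\sqrt{-i\tau})^3\,\eta(\tau)^3$, hence $\frac{1}{\eta(-1/\tau)^3} = \frac{1}{(\sqrt{-i\tau})^3}\cdot\frac{1}{\eta(\tau)^3}$. Then insert the Gauss formula $\int_{\mathbb{R}^3} q^{w_1^2/2+w_2^2/2+w_3^2/2}\,d{\bf w} = \frac{1}{(\sqrt{-i\tau})^3}$ to replace the prefactor $\frac{1}{(\sqrt{-i\tau})^3}$ by the integral, giving precisely $\int_{\mathbb{R}^3}\frac{q^{w_1^2/2+w_2^2/2+w_3^2/2}}{\eta(\tau)^3}\,d{\bf w}$, which is \eqref{generic} integrated over ${\bf w}$. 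Second, for the mixed term I would observe that $\frac{1}{\eta(-1/\tau)\eta(-2/\tau)}$ requires transforming $\eta$ at $-1/\tau$ and at $-2/\tau$; write $-2/\tau = -1/(\tau/2)$ so that $\eta(-2/\tau)=\eta(-1/(\tau/2)) = \sqrt{-i\tau/2}\,\eta(\tau/2)$, while $\eta(-1/\tau)=\sqrt{-i\tau}\,\eta(\tau)$. Multiplying, $\eta(-1/\tau)\eta(-2/\tau) = \sqrt{-i\tau}\cdot\sqrt{-i\tau/2}\cdot\eta(\tau)\eta(\tau/2) = \frac{(\sqrt{-i\tau})^2}{\sqrt{2}}\,\eta(\tau)\eta(\tau/2)$, so $\frac{1}{\eta(-1/\tau)\eta(-2/\tau)} = \frac{\sqrt{2}}{(\sqrt{-i\tau})^2}\cdot\frac{1}{\eta(\tau)\eta(\tau/2)}$; now apply the $n=2$ Gauss integral to turn $\frac{1}{(\sqrt{-i\tau})^2}$ into $\int_{\mathbb{R}^2} q^{w_1^2/2+w_3^2/2}\,d{\bf w}$, yielding the claimed identity with the $\sqrt{2}$ factor. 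Third, for $\frac{1}{\eta(-3/\tau)}$ write $-3/\tau = -1/(\tau/3)$, so $\eta(-3/\tau) = \sqrt{-i\tau/3}\,\eta(\tau/3) = \frac{\sqrt{-i\tau}}{\sqrt{3}}\,\eta(\tau/3)$, hence $\frac{1}{\eta(-3/\tau)} = \frac{\sqrt{3}}{\sqrt{-i\tau}}\cdot\frac{1}{\eta(\tau/3)}$, and the $n=1$ Gauss integral replaces $\frac{1}{\sqrt{-i\tau}}$ by $\int_{\mathbb{R}} q^{w^2/2}\,dw$, giving the last line.

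There is essentially no serious obstacle here; the main subtlety to be careful about is bookkeeping of the branch of the square root and the rescaling of $\tau$ inside $\eta$. One should note that \eqref{dedekind} as stated applies to $\eta(-1/\tau)$ for a general argument in $\mathbb{H}$, and that $\tau/2, \tau/3 \in \mathbb{H}$ whenever $\tau \in \mathbb{H}$, so the substitutions $-2/\tau = -1/(\tau/2)$ and $-3/\tau = -1/(\tau/3)$ are legitimate applications of the same formula. It is also worth remarking that the Gaussian integrals converge because $q^{w^2/2} = e^{\pi i \tau w^2}$ has $|q^{w^2/2}| = e^{-\pi \,\mathrm{Im}(\tau)\, w^2}$ decaying in $w$ for $\tau \in \mathbb{H}$, and that the stated value $\frac{1}{(\sqrt{-i\tau})^n}$ is the standard evaluation $\int_{\mathbb{R}^n} e^{\pi i \tau |{\bf w}|^2}\,d{\bf w}$ with the principal branch chosen consistently with $\sqrt{-i\tau} > 0$ for $\tau$ on the positive imaginary axis. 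With these conventions fixed, all three identities follow by the elementary algebra sketched above.
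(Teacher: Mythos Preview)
Your proposal is correct and follows exactly the approach the paper takes: the paper simply states that the three identities follow immediately from the Gauss integral formula together with the $\eta$-transformation \eqref{dedekind}, and you have carried out precisely those computations, including the correct rescalings $-2/\tau=-1/(\tau/2)$ and $-3/\tau=-1/(\tau/3)$ that produce the $\sqrt{2}$ and $\sqrt{3}$ factors. Your additional remarks on convergence and branch choices are more than the paper provides but are consistent with its conventions.
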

Notice that only the numerator of the integrand depends on ${\bf w}$. The denominator is placed inside the integral so that it matches the shape of (\ref{gen-mod}).

Proposition \ref{char-S3}, the character formulas for $F_{w_1,w_2,w_3}$, $F_{w_1,w_3} (\theta)$ and $F_{w}(\sigma)$, that is,  formulas (\ref{generic}), (\ref{12-char}) and (\ref{123}), together with Lemma \ref{modularity} imply 
\begin{thm}
The character of $\H(3)^{S_3}$ has a modular invariance property, in the sense of (\ref{gen-mod}).
\end{thm}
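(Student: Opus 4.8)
The plan is to start from the closed product form of ${\rm ch}[\H(3)^{S_3}]$ in Proposition \ref{char-S3}, rewrite it entirely in terms of the Dedekind $\eta$-function, apply the $S$-transformation $\tau\mapsto-1/\tau$ termwise, and recognize the outcome — via Lemma \ref{modularity} and the Fock-type character formulas \eqref{generic}, \eqref{12-char}, \eqref{123} — as an expression of the shape \eqref{gen-mod}.

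First I would record that, since $\eta(\tau)^3=q^{1/8}(q;q)_\infty^3$, $\eta(2\tau)\eta(\tau)=q^{1/8}(q^2;q^2)_\infty(q;q)_\infty$ and $\eta(3\tau)=q^{1/8}(q^3;q^3)_\infty$, Proposition \ref{char-S3} says precisely
\[
{\rm ch}[\H(3)^{S_3}](\tau)=\frac{1}{6}\left(\frac{1}{\eta(\tau)^3}+\frac{3}{\eta(2\tau)\eta(\tau)}+\frac{2}{\eta(3\tau)}\right).
\]
Then I would apply $\tau\mapsto-1/\tau$. Using \eqref{dedekind}, directly and then with $\tau$ replaced by $\tau/2$ and $\tau/3$, one gets $\eta(-1/\tau)=\sqrt{-i\tau}\,\eta(\tau)$, $\eta(-2/\tau)=\sqrt{-i\tau/2}\,\eta(\tau/2)$ and $\eta(-3/\tau)=\sqrt{-i\tau/3}\,\eta(\tau/3)$, so each summand of ${\rm ch}[\H(3)^{S_3}](-1/\tau)$ becomes a power of $(-i\tau)$ times one of $\eta(\tau)^{-3}$, $\bigl(\eta(\tau)\eta(\tau/2)\bigr)^{-1}$, $\eta(\tau/3)^{-1}$.

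Next I would invoke Lemma \ref{modularity} to turn $\eta(-1/\tau)^{-3}$, $\bigl(\eta(-1/\tau)\eta(-2/\tau)\bigr)^{-1}$ and $\eta(-3/\tau)^{-1}$ into Gaussian integrals over $\mathbb{R}^3$, $\mathbb{R}^2$ and $\mathbb{R}$ respectively (convergent since $\tau\in\mathbb{H}$), and match the integrands against \eqref{generic}, \eqref{12-char}, \eqref{123}: they are exactly ${\rm ch}[F_{w_1,w_2,w_3}](\tau)$, ${\rm ch}[F_{w_1,w_3}(\theta)](\tau)$ and ${\rm ch}[F_w(\sigma)](\tau)$. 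Since $F_{w_1,w_2,w_3}$ is an ordinary $\H(3)$-module, while $F_{w_1,w_3}(\theta)$ and $F_w(\sigma)$ are $g$-twisted $\H(3)$-modules for $g=(12)$, $(123)$, each restricts to a module for $\H(3)^{S_3}\subset\H(3)^{\langle g\rangle}$. Collecting constants yields
\[
{\rm ch}[\H(3)^{S_3}]\!\left(-\tfrac1\tau\right)=\frac16\int_{\mathbb{R}^3}{\rm ch}[F_{w_1,w_2,w_3}](\tau)\,d{\bf w}+\frac1{\sqrt2}\int_{\mathbb{R}^2}{\rm ch}[F_{w_1,w_3}(\theta)](\tau)\,d{\bf w}+\frac1{\sqrt3}\int_{\mathbb{R}}{\rm ch}[F_w(\sigma)](\tau)\,dw,
\]
which is \eqref{gen-mod} for $n=3$, with the $i=3,2,1$ families given by the Fock modules, the $(12)$-twisted modules and the $(123)$-twisted modules, and with $S$-coefficients the constants $\tfrac16,\tfrac1{\sqrt2},\tfrac1{\sqrt3}$.

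The argument is essentially an assembly of already-established facts, so there is no real obstacle; the only steps demanding care are the bookkeeping of the $\sqrt{-i\tau}$ factors coming from $\eta(-k/\tau)$ for $k=1,2,3$, and checking that the normalization built into Lemma \ref{modularity} reproduces the module characters \eqref{generic}, \eqref{12-char}, \eqref{123} on the nose, so that the ``$S$-matrix entries'' emerge as genuine constants with no leftover $\tau$- or parameter-dependence. It is also worth flagging that we claim only an equality of analytic functions here: the decomposition and irreducibility of the twisted modules $F_{w_1,w_3}(\theta)$, $F_w(\sigma)$ as $\H(3)^{S_3}$-modules is not needed for \eqref{gen-mod} and is deferred to \cite{MP2}.
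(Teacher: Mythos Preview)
Your proposal is correct and follows exactly the route the paper takes: assemble Proposition~\ref{char-S3}, the three character formulas \eqref{generic}, \eqref{12-char}, \eqref{123}, and Lemma~\ref{modularity} to exhibit ${\rm ch}[\H(3)^{S_3}](-1/\tau)$ in the form \eqref{gen-mod}. The paper merely lists these ingredients without spelling out the assembly, whereas you have done so (and your constants $\tfrac16,\tfrac1{\sqrt2},\tfrac1{\sqrt3}$ check out).
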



\subsection{Quantum dimensions}
Let $V$ be a vertex operator algebra.
For a $V$-module $M$ we define its {\em quantum dimension}:
$${\rm qdim}[M]:=\lim_{t \to 0^+} \frac{{\rm ch}[M](it)}{{\rm ch}[V](it)}.$$
In a situation where $V$ is rational (in the strongest sense) this limit always exists and is nonzero (see \cite{DRX1}). But for general $V$ this limit can be zero or even $+\infty$.

As our characters can be expressed in terms of $\eta(\tau)$, we will make use of  an asymptotic formula
\begin{equation} \label{asym} 
\eta(it)^n \sim \left(\frac{1}{t^{\frac{n}{2}}}\right) e^{-\frac{\pi n}{12 t}}, \ \ (t \to 0^+).
\end{equation}

\begin{prop} \label{quantum} As $\H(3)^{S_3}$-modules, we have: 

(a) For any Fock representation $F_{w_1,w_2,w_3}$,
$${\rm qdim}[F_{w_1,w_2,w_3}]=6.$$

(b) For representations appearing inside  $\H(3)$:
\begin{align*}
& {\rm qdim}[\H(3)^{S_3}]=1 \\
& {\rm qdim}[\H(3)^{S_3,sgn}]=1 \\
& {\rm qdim}[\H(3)^{S_3,st}]=2.
\end{align*} 

(c)  $F_{w_1,w_3}(\theta)$ and $F_{w}(\sigma)$ have quantum dimension $+\infty$. 
\end{prop}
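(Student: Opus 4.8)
The plan is to reduce everything to the asymptotic behaviour of powers of $\eta$ near $t\to 0^+$ via \eqref{asym}, after rewriting all characters in terms of the Dedekind $\eta$-function. First I would record that, using $\eta(\tau)=q^{1/24}(q;q)_\infty$, the formulas of Proposition~\ref{char-S3} and of the Lemma preceding translate into
\[
{\rm ch}[\H(3)^{S_3}](\tau)=\tfrac16\!\left(\tfrac{1}{\eta(\tau)^3}+\tfrac{3}{\eta(2\tau)\eta(\tau)}+\tfrac{2}{\eta(3\tau)}\right),\quad
{\rm ch}[\H(3)^{S_3,sgn}](\tau)=\tfrac16\!\left(\tfrac{1}{\eta(\tau)^3}-\tfrac{3}{\eta(2\tau)\eta(\tau)}+\tfrac{2}{\eta(3\tau)}\right),
\]
and ${\rm ch}[\H(3)^{S_3,st}](\tau)=\tfrac13\big(\eta(\tau)^{-3}-\eta(3\tau)^{-1}\big)$, while the Fock, $\theta$-twisted, and $\sigma$-twisted characters are already in this form in \eqref{generic}, \eqref{12-char}, \eqref{123}.

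Next I would substitute $\tau=it$ and apply \eqref{asym}. One gets $\eta(it)^{-3}\sim t^{3/2}e^{\pi/(4t)}$, $(\eta(2it)\eta(it))^{-1}\sim\sqrt2\,t\,e^{\pi/(8t)}$, and $\eta(3it)^{-1}\sim\sqrt3\,t^{1/2}e^{\pi/(36t)}$; since $\tfrac14>\tfrac18>\tfrac1{36}$, the first term strictly dominates, so
\[
{\rm ch}[\H(3)^{S_3}](it)\sim \tfrac16\,t^{3/2}e^{\pi/(4t)}\qquad(t\to 0^+),
\]
and likewise ${\rm ch}[\H(3)^{S_3,sgn}](it)\sim \tfrac16 t^{3/2}e^{\pi/(4t)}$ and ${\rm ch}[\H(3)^{S_3,st}](it)\sim \tfrac13 t^{3/2}e^{\pi/(4t)}$. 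This immediately gives part (b): the ratios tend to $1$, $1$, $2$ respectively (the case $\H(3)^{S_3}$ being trivial). For part (a), note that in \eqref{generic} the prefactor $q^{(w_1^2+w_2^2+w_3^2)/2}=e^{-\pi t(w_1^2+w_2^2+w_3^2)}\to 1$ as $t\to0^+$ (even for complex $w_i$, since $q\to1$), so ${\rm ch}[F_{w_1,w_2,w_3}](it)\sim t^{3/2}e^{\pi/(4t)}$ and the ratio with the denominator tends to $6$.

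For part (c) I would compute the asymptotics of the twisted characters at $\tau=it$: from \eqref{asym}, $(\eta(it/2)\eta(it))^{-1}\sim \tfrac1{\sqrt2}\,t\,e^{\pi/(4t)}$ and $\eta(it/3)^{-1}\sim \tfrac1{\sqrt3}\,t^{1/2}e^{\pi/(4t)}$, and again the $q^{\,\cdot}$ prefactors tend to $1$. The exponential factor $e^{\pi/(4t)}$ now \emph{matches} that of the denominator, but the polynomial prefactors are $t^{1}$, resp.\ $t^{1/2}$, against $t^{3/2}$ downstairs, so ${\rm qdim}[F_{w_1,w_3}(\theta)]=\lim_{t\to0^+}\tfrac{6}{\sqrt2}\,t^{-1/2}=+\infty$ and ${\rm qdim}[F_w(\sigma)]=\lim_{t\to0^+}\tfrac{6}{\sqrt3}\,t^{-1}=+\infty$. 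The only genuinely delicate point in the whole argument is the bookkeeping in the second step---correctly identifying that $\eta(\tau)^{-3}$ gives the dominant contribution to ${\rm ch}[\H(3)^{S_3}]$ and comparing exponential rates $\tfrac14,\tfrac18,\tfrac1{36}$ and then $\tfrac14$ again for the twisted sectors---after which parts (a)--(c) follow by taking limits.
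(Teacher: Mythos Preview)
Your argument is correct and follows essentially the same route as the paper's proof: identify $\eta(\tau)^{-3}$ as the dominant term in ${\rm ch}[\H(3)^{S_3}]$, read off the quantum dimensions in (a) and (b) from the constant in front of that term, and for (c) compare the polynomial prefactors $t,\ t^{1/2}$ against $t^{3/2}$ once the exponential rates $e^{\pi/(4t)}$ are seen to match. You actually supply a bit more detail than the paper does (explicitly comparing the rates $\tfrac14>\tfrac18>\tfrac1{36}$ and noting that the $q^{w^2/2}$ prefactors tend to $1$), but the method is the same.
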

\begin{proof}
We first observe that in the formulas for ${\rm ch}[\H(3)^{S_3}](\tau)$, ${\rm ch}[\H(3)^{S_3,sgn}](\tau)$ and ${\rm ch}[\H(3)^{S_3,st}](\tau)$
the infinite product $\frac{q^{-1/8}}{(q;q)_\infty^3}=\frac{1}{\eta(\tau)^3}$, multiplied with a constant, dominates the asymptotics $t \to 0^+$. 
Therefore in order to compute quantum dimensions for these modules, we only have to compute the ratio of these constants. This implies assertions  (a) and (b).

For (c), we need more precise asymptotic behaviors. We have
\begin{align*}
& \frac{1}{\eta(\tau)\eta(\tau/2)}  \sim \frac{t}{\sqrt{2}}  e^{\frac{\pi}{4t}},  \\ 
& \frac{1}{\eta(\tau/3)}  \sim \frac{t^{1/2}}{\sqrt{3}} e^{ \frac{\pi}{4 t}}, \\
& \frac{1}{\eta(\tau)^3} \sim t^{3/2} e^{\frac{\pi}{4 t}}.
\end{align*}
From these formulas  we get
$$\frac{{\rm ch}[F_{w_1,w_3}(\theta)(it)}{{\rm ch}[\H(3)^{S_3}](it)}  \sim \frac{1}{\sqrt{2} t^{1/2}},$$
$$\frac{{\rm ch}[F_{w}(\sigma)(it)}{{\rm ch}[\H(3)^{S_3}](it)}  \sim \frac{1}{\sqrt{3} t},$$
so both ratios have  growing terms at $t=0$. The proof follows.

\end{proof}

We finish with a conjecture which will be addressed in \cite{MP2}.
\begin{conj} \label{twisted} Every irreducible (ordinary) $\H(3)^{S_3}$-module $M$ appears in the decomposition of a  $g$-twisted $\H(3)$-module, for $g \in S_3$. Moreover, 
$${\rm qdim}(M) \in \{1, 2,6, +\infty \}.$$
\end{conj}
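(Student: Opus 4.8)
The plan is to establish Conjecture~\ref{twisted} in two stages. First, classify all irreducible (ordinary) $\H(3)^{S_3}$-modules and exhibit each as a constituent of a $g$-twisted $\H(3)$-module with $g\in S_3$; second, read off the quantum dimensions from the Dedekind-eta asymptotics \eqref{asym}, exactly as in Proposition~\ref{quantum}. The classification is by far the harder of the two.

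For the classification I would work with the two towers of orbifolds $\H(3)^{S_3}\subset\H(3)^{\langle(12)\rangle}\subset\H(3)$ and $\H(3)^{S_3}\subset\H(3)^{\mathbb{Z}_3}\subset\H(3)$, together with the free-field inputs already used above: the splitting $\H(2)^{\langle(12)\rangle}\cong\H(1)\otimes M(1)^+$ of Section~3, the $\sigma$-twisted Heisenberg construction of \cite{Le}, and the module theory of $M(1)^+$ from \cite{DN}. The candidate list of irreducible $g$-twisted $\H(3)$-modules, up to conjugacy of $g$, consists of the untwisted Fock modules $F_{w_1,w_2,w_3}$ of \eqref{generic}; the $(12)$-twisted modules $F_{w_1}\otimes M(1)(\theta)\otimes F_{w_3}$ of \eqref{12-char}, with $M(1)(\theta)$ split into its two irreducible $\theta$-twisted pieces as in \cite{DN}; and the $(123)$-twisted modules $F_w(\sigma)$ of \eqref{123}. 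I would then decompose each of these over $\H(3)^{S_3}$. A Fock module with pairwise distinct weights is already irreducible by Lemma~\ref{typical}; a Fock module $F_{w,w,w}$ carries a genuine $S_3$-action, so it decomposes into $S_3$-isotypic pieces as in \eqref{decomp}, each irreducible over $\H(3)^{S_3}$ by the fixed-point part of quantum Galois theory \cite{DM}; a Fock module with exactly two coincident weights is handled by passing to its $S_3$-orbit sum $F_{w_1,w_2,w_3}\oplus F_{w_2,w_1,w_3}\oplus\cdots$, which again admits an $S_3$-action and decomposes into isotypic pieces. The twisted modules are treated by the analogous bookkeeping relative to the centralizer of $g$, using the $\H(1)\otimes M(1)^+\otimes\H(1)$-structure of $\H(3)^{\langle(12)\rangle}$ and the $\mathbb{Z}_3$-grading of $F_w(\sigma)$.

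The \emph{main obstacle} is completeness of this list. Since $\H(3)$ is far from rational, the usual quantum Galois statements \cite{DM,DRX1} that would force every irreducible $\H(3)^{S_3}$-module to embed in a $g$-twisted $\H(3)$-module do not apply off the shelf. I would attack this via an induction functor: given an irreducible $\H(3)^{S_3}$-module $M$, induce it up to $\H(3)$ and to each of its $g$-twisted modules, show the induced object is nonzero, and prove the key structural fact that $\H(3)$ and every $g$-twisted $\H(3)$-module is completely reducible over $\H(3)^{S_3}$ with only finitely many isotypic types in each conformal weight --- which is plausible precisely because the relevant (twisted) Heisenberg actions are free-field and decompose into Fock-type pieces. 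A complementary, more hands-on handle is on the Zhu/$C_2$ side: the $C_2$-algebra of $\H(3)^{S_3}$ is closely tied to $\CC[x_1,x_2,x_3]^{S_3}$, and the stratification of its spectrum by orbit type (three distinct coordinates / one coincidence / all coincident, of dimensions $3,2,1$) matches the three-, two-, and one-parameter families above --- untwisted, $(12)$-twisted, and $(123)$-twisted respectively --- so making this correspondence precise with a Zhu-algebra analysis along the orbifold tower should close the argument.

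Granting the classification, the quantum-dimension statement is then routine. By \eqref{asym}, every untwisted constituent has character $\sim c\,t^{3/2}e^{\pi/(4t)}$, hence finite quantum dimension, and a short isotypic bookkeeping --- using \eqref{decomp}, the $M(1)^\pm$ characters, and the arithmetic constraint that the constituents of each Fock module have quantum dimensions summing to $6$ --- forces the only values that occur to be $1$, $2$ and $6$. Every $(12)$-twisted constituent is built from the faster-growing $\theta$-twisted Heisenberg Fock space and has character $\sim c\,t\,e^{\pi/(4t)}$, while every $(123)$-twisted constituent has character $\sim c\,t^{1/2}e^{\pi/(4t)}$; in both cases the ratio to ${\rm ch}[\H(3)^{S_3}](it)\sim \frac{1}{6}\, t^{3/2}e^{\pi/(4t)}$ diverges as $t\to0^+$, so the quantum dimension is $+\infty$. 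The point in this last step that still requires care --- the second place, after completeness, where the clean value list could fail --- is to check that no constituent of a twisted module slows down to the untwisted growth rate $t^{3/2}e^{\pi/(4t)}$ or below; I expect this from the fact that such a constituent still carries the half-integer (resp.\ third-integer) moded Heisenberg modes, but it must be verified.
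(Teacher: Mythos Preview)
The statement you are attempting to prove is labeled a \emph{Conjecture} in the paper and is not proved there. The authors explicitly write that they ``finish with a conjecture which will be addressed in \cite{MP2}'' and, in the closing remarks, that confirming it will require first describing the Zhu algebra of $\H(3)^{S_3}$. There is therefore no proof in the paper to compare your proposal against; what you have written is a plan for attacking an open problem.

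As a plan, your outline is broadly aligned with what the paper itself says is needed. You correctly flag completeness of the module list as the main obstacle, you correctly note that the orbifold machinery of \cite{DM,DRX1} does not apply directly since $\H(3)$ is irrational, and your suggestion to approach completeness via the Zhu/$C_2$-algebra stratified by orbit type is exactly the direction the authors indicate. The induction-functor paragraph, by contrast, is too vague to carry weight: the assertion that every $g$-twisted $\H(3)$-module is completely reducible over $\H(3)^{S_3}$ with finitely many isotypic types per weight is itself a substantial claim that would need proof.

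There is also a concrete gap in your quantum-dimension step that deserves attention before anything else. Take an untwisted Fock module $F_{w,w,w_3}$ with exactly two coincident weights ($w\neq w_3$). Its $\mathbb{Z}_3$-orbit in the set of isomorphism classes of $\H(3)$-modules has size $3$, so it remains irreducible over $\H(3)^{\mathbb{Z}_3}$; but it is $(12)$-stable, so by the $\mathbb{Z}_2$-orbifold theory it splits as $M_+\oplus M_-$ over $\H(3)^{S_3}$. Computing exactly as in Proposition~\ref{char-S3} gives
\[
{\rm ch}[M_\pm](\tau)=\frac{q^{\,w^2+w_3^2/2}}{2}\left(\frac{1}{\eta(\tau)^3}\pm\frac{1}{\eta(\tau)\eta(2\tau)}\right),
\]
and the asymptotics \eqref{asym} then yield ${\rm qdim}(M_\pm)=3$, which lies outside the conjectured set $\{1,2,6,+\infty\}$. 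Your sentence that ``a short isotypic bookkeeping \dots\ forces the only values that occur to be $1$, $2$ and $6$'' skips precisely this case. Either you must show that these $M_\pm$ are not irreducible (which would require an argument, since the standard orbifold results point the other way), or the conjectured value set needs amending. Whichever way this resolves, it is the first thing your proposed proof has to confront.
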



\section{Future work}

\begin{rem}

\begin{enumerate}

\item  In order to confirm Conjecture \ref{twisted}, we first have to describe the Zhu algebra of $\H(3)^{S_3}$. 
This will be addressed in \cite{MP2}.

\item In our very recent work \cite{MP}, we determine strong minimal generating sets for the permutation orbifold ${\mathcal F}(3)^{S_3}$, where $\F$ is the free  fermion vertex algebra and for $\mathcal{SF}(3)^{S_3}$,  
where $\mathcal{SF}$ is the rank one symplectic fermion vertex superalgebra \cite{CL}. We prove that these vertex algebras are of type $\frac12,2,4,\frac92$ and $1^2,2,3^3,4^3,5^5,6^4$, respectively.

\end{enumerate}

\end{rem}

{\bf Acknowledgements:} 
We thank Drazen Adamovic, Thomas Creutzig and especially Andrew Linshaw for useful discussions. We also thank the referee for constructive criticism and other valuable 
comments.  Several computations in this paper are performed by the OPE package, \cite{T}, for Mathematica.

The first author was partially supported by the NSF grant DMS-1601070.

\vspace{.3in}

\vspace{.2in}

\end{document}